\documentclass{amsart}
\usepackage{amsfonts,amssymb,amsmath,amsthm}
\usepackage{url}
\usepackage{enumerate}

\urlstyle{sf}
\newtheorem{theorem}{Theorem}[section]
\newtheorem{lemma}[theorem]{Lemma}
\newtheorem{proposition}[theorem]{Proposition}
\numberwithin{equation}{section}

\author{ James McKee}
\address{Department of Mathematics\\
Royal Holloway, University of London\\
Egham Hill\\
Egham\\
Surrey TW20 0EX\\
UK}
\email{James.McKee@rhul.ac.uk}
\author{ Chris Smyth}
\address{School of Mathematics and Maxwell Institute for Mathematical Sciences\\
University of Edinburgh\\
Edinburgh EH9 3JZ\\
Scotland, U.K.} \email{C.Smyth@ed.ac.uk} \subjclass[2000]{11R06}

\begin{document}

\title[Salem numbers and Pisot numbers]
{Salem numbers and Pisot numbers via interlacing}

\begin{abstract}
We present a general construction of Salem numbers via rational functions whose zeros and poles mostly lie on the unit circle and satisfy an interlacing condition.  This extends and unifies earlier work.  We then consider the `obvious' limit points of the set of Salem numbers produced by our theorems, and show that these are all Pisot numbers, in support of a conjecture of Boyd.  We then show that all Pisot numbers arise in this way.  Combining this with a theorem of Boyd, we  produce all Salem numbers via an interlacing construction.
\end{abstract}

\maketitle

\section{Introduction}

A {\em Pisot number} is a real algebraic integer $\theta>1$,
all of whose other (algebraic) conjugates have modulus strictly
less than 1.
A {\em Salem number} is a real algebraic integer $\tau>1$,
whose other conjugates all have modulus at most 1, with at least
one having modulus exactly 1.
It follows that the minimal polynomial $P(z)$ of $\tau$ is
{\em reciprocal} (i.e., $z^{{\rm deg\ }P}P(1/z)=P(z)$), that
$\tau^{-1}$ is a conjugate of $\tau$, that all conjugates of
$\tau$ other than $\tau$ and $\tau^{-1}$ have modulus exactly 1,
and that $P(z)$ has even degree.
The set of all Pisot numbers is traditionally
denoted $S$, with $T$ being used for the set of all
Salem numbers.

In \cite{MS3}, we constructed Salem numbers via rational functions associated to certain rooted trees (the \emph{quotients} of rooted \emph{Salem trees}).  In this paper we abstract the essential properties of these rational functions, and give a much more general construction of Salem numbers (Theorems \ref{T:CCSalem}, \ref{T:CSSalem} and \ref{T:SSSalem}) via rational functions whose zeros and poles mostly lie on the unit circle and satisfy an interlacing condition.  In addition to extending the work of \cite{MS3}, this also extends the interlacing construction of \cite {MS2}.  We then consider the `obvious' limit points of the set of Salem numbers produced by our theorems, and show that these are all Pisot numbers (Theorems \ref{T:CCPisot} and \ref{T:SSPisot}).  This supports a conjecture of Boyd \cite[p.~327]{Bo}.  We then show that all Pisot numbers arise in this way (Theorem \ref{T:allPisots}).  Combining this with a theorem of Boyd, we show that all Salem numbers can be produced via interlacing.  We conclude the paper with some applications to the study of small Salem numbers and negative-trace elements of $S$ or $T$.

It is our hope that these ideas will lead to further
improvements in our understanding of the set of Salem numbers, and
may give a way to attack some outstanding problems: (i) is there a
least Salem number, and, if so, what is it? (ii) is the set of Salem
numbers below (say) $1.3$ finite, and, if so, what are its members?
(iii) what is the derived set of the set of Salem numbers?

For dealing with Pisot numbers one has the trivial but extremely
useful observation that if $f(z)$ is a monic polynomial with
integer coefficients having a simple real root $\theta>1$, such
that all roots other than $\theta$ have modulus strictly less than
1, and the constant term of $f(z)$ is not 0, then $f(z)$ is
irreducible, and is therefore the minimal polynomial of $\theta$
(if $f(z)$ split into two nontrivial factors, then the factor
that does not have $\theta$ as a root would have as its constant
term something that on the one hand is a nonzero integer, and on
the other hand is a product of numbers all with modulus strictly
less than one, which is absurd). For Salem numbers, the analogous
statement is not as pleasant: if $g(z)$ is a monic polynomial with
integer coefficients, having a simple real root $\tau>1$, such
that all the other roots of $g(z)$ have modulus at most one, with
at least one having modulus equal to 1, and if the constant term of
$g(z)$ is not zero, then $g(z)=t(z)u(z)$, where $t(z)$ is the
minimal polynomial of $\tau$, and $u(z)$ is a \emph{cyclotomic
polynomial} (for us, following \cite{Bo}, meaning simply that all its roots are roots of unity:
it need not be irreducible). It is the possibility that $u(z)$
might not equal 1 that renders explicit constructions of the
minimal polynomials of Salem numbers more difficult. For Pisot
numbers it is enough to find a polynomial that has all its roots
in the right place; for Salem numbers one also has to deal with the possibility of
cyclotomic factors. A further annoyance is that $t(z)$ might have
degree 2, in which case one has that $\tau$ is a reciprocal quadratic Pisot
number rather than a Salem number.

With these thoughts in mind, it is convenient to define a {\em
Pisot polynomial} to be a polynomial of the form $z^kf(z)$, where
$k\ge 0$ and $f(z)$ is the minimal polynomial of a Pisot number.
And we define a {\em Salem polynomial} to be a polynomial of the
form $t(z)u(z)$, where $u(z)$ is a cyclotomic polynomial, and
$t(z)$ is either the minimal polynomial of a Salem number or is
the minimal polynomial of reciprocal quadratic Pisot number.

The plan for the remainder of the paper is as follows.
In \S\ref{S:interlacing} we define the various interlacing conditions that will subsequently be exploited.
Section \ref{S:CCSalems} shows how Salem numbers can be produced from pairs of polynomials that satisfy a simple circular interlacing condition; then  \S\ref{S:CCPisots} considers the obvious limit points of the set of Salem numbers produced, and shows that these are all Pisot numbers.
In  \S\ref{S:CSSSconstructions} we prove analogous results for other naturally-arising variants of interlacing.
In \S\ref{S:allPisots} we show that all Pisot numbers are generated by one of these interlacing constructions, and in  \S\ref{S:allSalems} we show that all Salem numbers are produced, and we put Salem numbers into four (overlapping) subsets according to the flavour of interlacing used to produce them.

Several other interlacing constructions appear in the literature.
Most notably, Bertin and Boyd \cite{BB} classify all Salem
numbers in a way that involves interlacing. In
\S\ref{S:BB} we briefly compare their results with ours, before some concluding applications and remarks in  \S\ref{S:conclusion}.
Other
interlacing constructions have appeared in \cite{CW} (Proposition
4.1), \cite{L}, and \cite{MS2}. For an encyclopaedic account of
real interlacing, see \cite{F}.

We use $\mathbb{T}$ to denote the unit circle, $\mathbb{T}=\bigl\{ z \in \mathbb{C} \,\big|\, |z|=1 \bigr\}$.

\section{Flavours of interlacing}\label{S:interlacing}
Several variants of interlacing will be seen to arise naturally as we study Salem numbers.
We are concerned with interlacing on the unit circle, but the different flavours of interlacing are perhaps most easily understood when one moves to the real line via a Tchebyshev transformation.
In  \S\ref{SS:Tch} we recall this transformation; in  \S\S\ref{SS:CC}--\ref{SS:SS} we describe interlacing in the complex world, and in \S\ref{SS:realiq} we view it from the real, post-Tchebyshev, perspective.

\subsection{Moving to the real world}\label{SS:Tch}
Our ultimate objective is to understand Salem numbers and Pisot
numbers, and these are firmly rooted in the world of complex
numbers. We shall give constructions that involve {reciprocal}
polynomials.  Moreover most (perhaps all) of their roots will be
in $\mathbb{T}$, and other roots will be real and positive. It
will be extremely convenient for the proofs to transform such
polynomials to totally real polynomials.  The transformation that
we shall use is
\begin{equation}\label{E:xfromz}
x = \sqrt{z} + 1/\sqrt{z}\,.
\end{equation}
It is a matter of historical accident (growing out of \cite{MRS},
where this particular transformation was essential) that this
variant of the  Tchebyshev transformation is used rather than the
more familiar $x = z + 1/z$, which would serve just as well, but
with many small differences in detail. In applying
(\ref{E:xfromz}), a fixed branch of the square-root is used
throughout the right-hand side, but since there is a choice of
branch we generally find two possible values of $x$. If $z\in\mathbb{T}$, or if $z$ is real, then the corresponding one or two
values of $x$ are real.

The transformation (\ref{E:xfromz}) is generally a 2-to-2 map,
with a reciprocal pair $z$, $1/z$ mapping to a pair $x$, $-x$. The
exceptions are important for us: the single point $z=-1$
corresponds to the single point $x=0$, and the single point $z=1$
corresponds to the pair $x=2$, $x=-2$.  The inverse correspondence
involves solving a quadratic equation, but we shall never have
need for it explicitly.

\subsection{CC-interlacing}\label{SS:CC}
Suppose that $P(z)$ and $Q(z)$ are coprime polynomials with integer
coefficients, and with positive top coefficients.  We say that $Q$
and $P$ satisfy the \emph{CC-interlacing condition}, or that $Q/P$
is a \emph{CC-interlacing quotient} if:
\begin{itemize}
\item $P$ and $Q$ have all their roots in $\mathbb{T}$;

\item all their roots are simple;

\item their roots interlace on the unit circle, in the sense that
between every pair of roots of $P(z)$ there is a root of $Q(z)$,
and between every pair of roots of $Q(z)$ there is a root of
$P(z)$.
\end{itemize}
Extending to real coefficients, one recovers the \emph{circular
interlacing condition} of \cite{MS2}.  If $P$ and $Q$ satisfy the
CC-interlacing condition, then they must have the same degree.
Moreover, both $1$ and $-1$ must appear among their roots. One of
$P$ and $Q$ is a reciprocal polynomial; the other is antireciprocal ($z-1$ times a
reciprocal polynomial).  The nomenclature is a shorthand for
``cyclotomic-cyclotomic interlacing'', which in turn is a slight
abuse of terminology: the two polynomials have all their roots in $\mathbb{T}$, but need not be cyclotomic since they need not be
monic.

For an example (derived from the quotient attached to $\tilde{E}_8(8)$ in \cite[p.~220]{MS3}) to which we shall return later, take
\begin{equation}\label{E:Lehmer}
\begin{array}{rcl}
P(z) &=& (z-1)(z+1)(z^2 + z+ 1)(z^4 + z^3 + z^2 + z+1)\,, \\
\\
Q(z) &=& z^8 + z^7 - z^5 - z^4 -z^3 + z + 1\,.
\end{array}
\end{equation}
Thus $Q(z)$ is the thirtieth cyclotomic polynomial, and $P(z)$ is
the product of the first, second, third and fifth cyclotomic
polynomials.  The roots of $P$ and $Q$ interlace on the unit
circle, as shown in Figure \ref{F:CC}: $Q/P$ is a CC-interlacing
quotient.

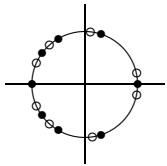
\begin{figure}

\begin{picture}(60,60)(-30,-20)
\put(-30,0){\line(1,0){60}} \put(0,-30){\line(0,1){60}}
\put(0,0){\circle{40}} \put(20,0){\circle*{3}}
\put(-20,0){\circle*{3}} \put(-10,17.3){\circle*{3}}
\put(-10,-17.3){\circle*{3}} \put(6.2,19){\circle*{3}}
\put(6.2,-19){\circle*{3}} \put(-16.2,11.8){\circle*{3}}
\put(-16.2,-11.8){\circle*{3}} \put(19.6,4.2){\circle{3}}
\put(19.6,-4.2){\circle{3}} \put(2.1,19.9){\circle{3}}
\put(-13.4,14.9){\circle{3}} \put(-18.3,8.1){\circle{3}}
\put(-18.3,-8.1){\circle{3}} \put(-13.4,-14.9){\circle{3}}
\put(2.9,-19.9){\circle{3}} 
\end{picture}
\caption{CC-interlacing. The roots of $(z-1)(z+1)(z^2+z+1)(z^4+z^3+z^2+z+1)$ [$\bullet$]
 interlace on $\mathbb T$ with those of $z^8+z^7-z^5-z^4-z^3+z+1$ [$\circ$].}
\label{F:CC}
\end{figure}

Our definition is symmetric in $P$ and $Q$: if $Q/P$ is a CC-interlacing quotient, then so is $P/Q$.

Note that the definition of the CC-interlacing condition does not
require either $P$ or $Q$ to be monic.  When both are monic, then
by a theorem of Kronecker \cite{Kro} they are cyclotomic.  In this
case, all interlacing examples have essentially been classified by Beukers and
Heckman \cite{BH}.

\subsection{CS-interlacing}
Now we turn to another flavour of interlacing, where one
polynomial has all its roots in $\mathbb{T}$, and the other has
all but two roots in $\mathbb{T}$, with these two roots
being $\theta$ and $1/\theta$ for some real $\theta>1$.  Here
``CS'' is short for ``cyclotomic-Salem'', with the same caveat as
before that the polynomials need not be monic.  One will be
reciprocal, and the other will be antireciprocal.

Suppose that $P(z)$ and $Q(z)$ are coprime polynomials with integer
coefficients, and with positive top coefficients.  We say that $P$
and $Q$ satisfy the \emph{CS-interlacing condition} and
that $Q/P$ is a \emph{CS-interlacing quotient} if:
\begin{itemize}
\item $P$ is reciprocal, and $Q$ is antireciprocal;

\item $P$ and $Q$ have the same degree;

\item all the roots of $P$ and $Q$ are simple, except perhaps at $z=1$;

\item $z^2 - 1 \mid Q$;

\item $Q$ has all its roots in $\mathbb{T}$;

\item $P$ has all but two roots in $\mathbb{T}$, with these
two being real, positive and $\ne 1$;

\item on the punctured unit circle $\mathbb{T}\backslash\{1\}$, the roots of $Q$ and
$P$ interlace.
\end{itemize}
Notice the strange interlacing condition.  On the unit circle, $Q$ has two more roots than $P$, and necessarily $Q(1)=0$.  The interlacing condition implies that either $Q$ has a triple root at $1$, or it has a pair of simple roots that are closer to $1$ on the unit circle than any of the roots of $P$.

A couple of pictures should clarify this: Figures \ref{F:CSsimple} and \ref{F:CStriple}.

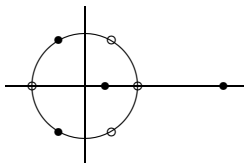
\begin{figure}
\begin{picture}(60,60)(-30,-20)
\put(-30,0){\line(1,0){90}} \put(0,-30){\line(0,1){60}}
\put(0,0){\circle{40}}

\put(-20,0){\circle{3}} \put(20,0){\circle{3}}
\put(10,17.3){\circle{3}} \put(10,-17.3){\circle{3}}

\put(-10,17.3){\circle*{3}} \put(-10,-17.3){\circle*{3}} \put(7.6,0){\circle*{3}} \put(52.4,0){\circle*{3}}

\end{picture}
\caption{{CS-interlacing with simple roots.} {The roots of
$Q=(z^2-1)(z^2-z+1)$ [$\circ$] interlace on $\mathbb{T}\backslash\{1\}$}
 {with those of $P=(z^2+z+1)(z^2-3z+1)$
[$\bullet$].}}
\label{F:CSsimple}
\end{figure}

\begin{figure}
\begin{picture}(60,60)(-30,-20)
\put(-30,0){\line(1,0){90}} \put(0,-30){\line(0,1){60}}
\put(0,0){\circle{40}}

\put(-20,0){\circle{3}} \put(20,0){\circle{3}} \put(20,0){\circle{5}} \put(20,0){\circle{7}}

\put(-10,17.3){\circle*{3}} \put(-10,-17.3){\circle*{3}} \put(7.6,0){\circle*{3}} \put(52.4,0){\circle*{3}}

\end{picture}
\caption{{CS-interlacing with a triple root at $1$.} {The roots of
$Q=(z+1)(z-1)^3$ [$\circ$] interlace on $\mathbb{T}\backslash\{1\}$}
{with those of $P=(z^2+z+1)(z^2-3z+1)$
[$\bullet$].}}
\label{F:CStriple}
\end{figure}
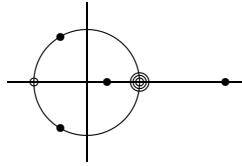

There is no symmetry in the CS-interlacing conditions: if $Q/P$ is a CS-interlacing quotient, then $P/Q$ is not.

\subsection{SS-interlacing}\label{SS:SS}
For our third flavour of interlacing, ``SS'' suggests ``Salem-Salem'' with the usual caveats.

Suppose that $P(z)$ and $Q(z)$ are coprime polynomials with integer
coefficients, and with positive top coefficients.  We say that $P$
and $Q$ satisfy the \emph{SS-interlacing condition} and
that $Q/P$ is an \emph{SS-interlacing quotient} if:
\begin{itemize}
\item $P$ and $Q$ have the same degree;

\item all the roots of $P$ and $Q$ are simple;

\item one of $P$ and $Q$ is reciprocal, the other is
antireciprocal;

\item each of $P$ and $Q$ has all but two of its roots in $\mathbb{T}$, with these two being real, positive and $\ne 1$;

\item on the unit circle, the roots of $Q(z)$ and $P(z)$
interlace.
\end{itemize}

The behaviour of the real roots of $P$ and $Q$ gives us two possible types of SS-interlacing.  If $Q/P$ is an SS-interlacing quotient then we say that it is a \emph{type $1$ interlacing quotient} if the largest real root of $PQ$ is a root of $P$, and it is a \emph{type $2$ interlacing quotient} if the largest real root of $PQ$ is a root of $Q$.  There is symmetry in the conditions for SS-interlacing, but between  the two types: $Q/P$ is a type $1$ SS-interlacing quotient if and only if $P/Q$ is a type $2$ SS-interlacing quotient.
Again it is helpful to see a picture: Figure \ref{F:SS}.

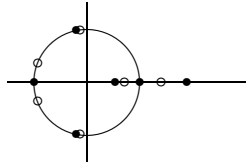
\begin{figure}
\begin{picture}(60,60)(-30,-20)
\put(-30,0){\line(1,0){90}} \put(0,-30){\line(0,1){60}}
\put(0,0){\circle{40}}

\put(14.27,0){\circle{3}} \put(28.03,0){\circle{3}} \put(-18.6,-7.3){\circle{3}} \put(-18.6,7.3){\circle{3}} \put(-2.54,19.8){\circle{3}} \put(-2.54,-19.8){\circle{3}}

\put(-20,0){\circle*{3}} \put(20,0){\circle*{3}} \put(10.6,0){\circle*{3}} \put(37.7,0){\circle*{3}} \put(-4.14,19.6){\circle*{3}} \put(-4.14,-19.6){\circle*{3}}

\end{picture}
\caption{{Type $1$ SS-interlacing.} {The roots of
$Q=z^6-z^4-z^3-z^2+1$ [$\circ$] interlace on $\mathbb{T}$ with} {those of $P=z^6-2z^5+2z-1$
[$\bullet$].} For type 2 SS-interlacing, interchange $P$ and $Q$.}
\label{F:SS}
\end{figure}

Swapping the roles of $P$ and $Q$ in the example in Figure \ref{F:SS} gives an example of type $2$ SS-interlacing.  Notice that we do \emph{not} insist that the roots on the positive real axis interlace (although in this particular example they do).

\subsection{Real interlacing quotients}\label{SS:realiq}
From any of the above flavours and types of interlacing pairs, we
shall consider transforming the pair to a rational function with
only real zeros and poles.  These zeros and poles will generally
interlace (though the interlacing is not always perfect), and for
convenience we shall refer to the rational function as a (real)
interlacing quotient.

If $P$ and $Q$ satisfy the CC-interlacing condition, or the
CS-interlacing condition, or either type of SS-interlacing
condition, then we transform the function $\sqrt{z}Q(z)/(z-1)P(z)$ via the
map (\ref{E:xfromz}) to get a quotient $q(x)/p(x)$, with $q$ and
$p$ coprime polynomials in $\mathbb{Z}[x]$, and $xq(x)/p(x)$ a rational function in $x^2$.  Suppose $P$ and $Q$
have degree $d$.  If $z-1\mid Q(z)$ (which must be the case for
CS-interlacing), then when considering $Q(z)/(z-1)P(z)$ we have
pulled out a root of $Q$, and the remaining roots of $P$ and $Q$
transform in a $2$-to-$2$ or $1$-to-$1$ manner, so that $q$ has
degree $d-1$ and $p$ has degree $d$. If $z-1\mid P(z)$, then the
factor $(z-1)^2$ in the denominator of $Q(z)/(z-1)P(z)$ transforms
to $x^2-4$: we conclude that $q$ has degree $d$ and $p$ has degree
$d+1$.  We call $q(x)/p(x)$ the \emph{(real) interlacing quotient}
corresponding to $Q(z)/P(z)$.  The conditions on the roots of $P$
and $Q$ are sufficient to ensure that the roots of $p$ and $q$ are
all real.

For CC-interlacing, CS-interlacing and type $1$ SS-interlacing the roots
of $p$ and $q$ interlace perfectly: the zeros and poles of the
interlacing quotient interlace.  The quotient $q(x)/p(x)$ is
decreasing wherever it is defined, and has partial fraction
expansion
\begin{equation}\label{E:partialfraction}
\sum_{i=1}^{\deg p} \frac{\lambda_i}{ x - \alpha_i}
\end{equation}
where the $\alpha_i$ are the roots of $p$ and the $\lambda_i$ are
all positive.

For type 2 SS-interlacing, there is perfect interlacing of the
zeros of $q$ and $p$ within the interval $[-2,2]$, but there is a
blip to the right of $x=2$ (and to the left of $x=-2$)
with the top (and bottom) zeros of $p$ and $q$ being in the wrong order for perfect interlacing.
The
derivative of the quotient $q(x)/p(x)$ changes sign twice, 
and the partial fraction expansion (\ref{E:partialfraction}) has
two of the $\lambda_i$ negative.

Note that a real interlacing quotient $q(x)/p(x)$, as defined, is
always an odd function: one of $p$ and $q$ is an even polynomial
and the other is an odd polynomial.  The degree of the denominator is one more than the degree of the numerator, and the top coefficients are positive.
As $x\rightarrow\infty$, $q(x)/p(x)\rightarrow0$ from above.

Proposition 3.3 of \cite{MS2} extends to this setting.

\begin{lemma}\label{L:interlacingsum}\begin{itemize}
\item[(a)] If $Q_1/P_1$ and $Q_2/P_2$ are CC-interlacing
quotients, then so is their sum.

\item[(b)] Suppose that $Q_1/P_1$ is either a CS-interlacing
quotient or an SS-interlacing quotient and that $Q_2/P_2$ is a CC-interlacing quotient.
Then $Q_1/P_1 + Q_2/P_2$ is either a CS-interlacing quotient or an
SS-interlacing quotient.
\end{itemize}
\end{lemma}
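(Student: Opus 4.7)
The plan is to transport both parts to the real line via the Tchebyshev transformation of \S\ref{SS:Tch} and then read off the conclusion from the characterisation of real interlacing quotients given in \S\ref{SS:realiq}. First I would check that the assignment $Q/P \mapsto q/p$ is additive in the following sense: writing
\[
\frac{Q_1(z)}{P_1(z)} + \frac{Q_2(z)}{P_2(z)} = \frac{Q_1(z)P_2(z)+Q_2(z)P_1(z)}{P_1(z)P_2(z)},
\]
then multiplying by $\sqrt{z}/(z-1)$ and applying (\ref{E:xfromz}) yields exactly $q_1(x)/p_1(x) + q_2(x)/p_2(x)$. So it suffices to analyse sums on the real side. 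The reciprocal/antireciprocal bookkeeping is automatic: a sum of two reciprocal (respectively antireciprocal) polynomials of the same degree is again reciprocal (respectively antireciprocal), and products behave in the obvious way, so the numerator and denominator of the combined quotient come out with the correct parity provided we choose the parities of $P_1,Q_1,P_2,Q_2$ consistently.

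For part (a), each $q_i/p_i$ has a partial fraction expansion of the form (\ref{E:partialfraction}) with every $\lambda_j > 0$. The sum $q_1/p_1 + q_2/p_2$ has the same form: residues at shared poles add and remain positive, residues at unshared poles are inherited. Any rational function of this shape is strictly decreasing on every interval of its domain, and its zeros therefore interlace perfectly with its poles; together with the correct sign of the top coefficient this shows the sum is again a real interlacing quotient of CC-type, and pulling back through the inverse of (\ref{E:xfromz}) produces a CC-interlacing pair.

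For part (b), the real quotient attached to a CS- or an SS-pair differs from the CC-case only in its behaviour near $x=\pm 2$: there is at most one real pole outside $[-2,2]$ (coming from the real pair $\theta,1/\theta$), and in the type-2 SS case a pair of residues is negative. Adding the CC-summand, whose partial fraction contributes only positive residues at poles strictly inside $[-2,2]$, preserves strict monotonicity and hence perfect interlacing on $(-2,2)$, leaves the pole outside $[-2,2]$ untouched (the CC-summand has no poles there against which it could cancel), and does not alter residue signs outside $[-2,2]$. The sum therefore again satisfies the defining properties of either a CS- or an SS-quotient; which of the two (and, in the SS case, which type) arises is determined by the local picture at $\pm 2$, which is exactly what decides CS versus SS in the first place.

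The main obstacle is making sure that passing $(Q_1P_2+Q_2P_1)/(P_1P_2)$ to lowest terms does not introduce pathological cancellation: for example, a cancelled factor could in principle drop the degree, merge a circular root with the real pole at $\theta$, or destroy simplicity. Simplicity of the roots of each $P_i$ and $Q_i$ away from $z=1$, and the disjointness of the real-line and unit-circle root loci, rule this out, but the verification requires the same careful bookkeeping that underlies Proposition~3.3 of \cite{MS2}, on which the present lemma is modelled.
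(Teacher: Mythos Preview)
Your approach is essentially identical to the paper's: transform to the real line, use the partial-fraction characterisation (\ref{E:partialfraction}), and observe that adding a CC-quotient (all positive residues) preserves the required sign pattern of the $\lambda_i$. Two small corrections: the real pair $\theta,1/\theta$ produces \emph{two} poles outside $[-2,2]$ (at $\pm x_0$, by the $2$-to-$2$ nature of (\ref{E:xfromz})), not ``at most one''; and in the type-$2$ SS case you should explicitly verify, as the paper does, that the sum still has a zero to the right of its largest pole (since $q_1/p_1\to-\infty$ as $x\to x_0^+$ while both summands are positive for large $x$).
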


\begin{proof}
Part (a) is just Proposition 3.3 of \cite{MS2}.

For (b),  we transform to the real world, where it easy to see
that everything is of the right shape. Let $q_1/p_1$ and $q_2/p_2$
be the corresponding real interlacing quotients. Then the partial
fraction expansions of $q_1/p_1$ and $q_2/p_2$ as in
(\ref{E:partialfraction}) will have all the $\lambda_i$ positive,
except in the case of type $2$ SS-interlacing, when the
$\lambda_i$ corresponding to the largest and smallest $\alpha_i$
are negative: these correspond to the roots of $p_1$ outside
$[-2,2]$.  The sum $q_1/p_1+q_2/p_2$ will be of the same form:
either all the numerators in the partial fraction expansion will
be positive, or there will be precisely two negative numerators
corresponding to the roots of $p_1$ outside $[-2,2]$.  This is the
right shape for CS/SS-interlacing: for type-$2$ SS-interlacing we
know that $q_1(x)/p_1(x)\rightarrow -\infty$ as $x$ approaches the largest pole from above, so the same is true for the sum. Also, both $q_1(x)/p_1(x)$ and $q_2(x)/p_2(x)$ are positive for all sufficiently large $x$, so the sum has a zero to the right of this pole.
\end{proof}

\section{Salem numbers via CC-interlacing}\label{S:CCSalems}
We now show how to produce Salem numbers from CC-interlacing quotients.
The first construction, which is essentially that of \cite{MS2}, uses a single quotient; we then consider a product construction combining two interlacing quotients in a multiplicative manner, inspired by (but greatly generalising) a formula for the quotients of certain Salem trees \cite{MS3}.

\subsection{A single pair}
Our first interlacing construction is a translation of Proposition
3.2(a) of \cite{MS2}.  This is also a special case of our second
construction, Theorem \ref{T:CCproductSalem}.

\begin{theorem}\label{T:CCSalem}
Let $Q/P$ be a CC-interlacing quotient, with the additional
constraint that $P$ is monic. Let $q/p$ be the corresponding real
interlacing quotient. If
\begin{equation}\label{E:limineq}
\lim_{x\rightarrow 2+}q(x)/p(x)>2\,,
\end{equation}
then the only solutions to the equation
\begin{equation}\label{E:CCSalem}
\frac{Q(z)}{(z-1)P(z)} = 1 + \frac{1}{z}
\end{equation}
are a Salem number (or a reciprocal quadratic Pisot number), its
conjugates, and possibly one or more roots of unity.
\end{theorem}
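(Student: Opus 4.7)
The plan is to recast (\ref{E:CCSalem}) as a polynomial equation, transfer the question to the real line via (\ref{E:xfromz}), and use the partial-fraction form (\ref{E:partialfraction}) of the real interlacing quotient $q/p$ to count real roots. Clearing denominators in (\ref{E:CCSalem}) gives
\[F(z) := (z^2-1)P(z) - zQ(z) = 0.\]
Since $P$ is monic of degree $d$ and $\deg Q=d$, the polynomial $F$ is monic of degree $d+2$ with integer coefficients. A short check using that exactly one of $P,Q$ is reciprocal while the other is antireciprocal shows that $F$ is itself reciprocal when $z-1\mid P$, and is antireciprocal (so carries a factor $z-1$) when $z-1\mid Q$; the latter factor is a spurious root of unity that is absorbed harmlessly into the cyclotomic part at the end. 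It therefore suffices to show that the non-spurious part of $F$ has a single real root $\tau>1$, its reciprocal $1/\tau$, and all remaining roots simple and on $\mathbb{T}$.

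Multiplying (\ref{E:CCSalem}) by $\sqrt{z}$ and applying (\ref{E:xfromz}) converts the left-hand side into $q(x)/p(x)$ and the right-hand side into $\sqrt{z}+1/\sqrt{z}=x$, so that $F(z)=0$ is equivalent to
\[H(x) := x\,p(x) - q(x) = 0,\]
with $\deg H = \deg p + 1$. Under the $2$-to-$2$ correspondence of \S\ref{SS:Tch}, a reciprocal pair $\{z,1/z\}$ of $F$-roots matches a pair $\{x,-x\}$ of $H$-roots. The crucial input is that for a CC-interlacing quotient all residues $\lambda_i$ in (\ref{E:partialfraction}) are positive, so $q(x)/p(x)$ is strictly decreasing on every component of its domain: from $+\infty$ to $-\infty$ between consecutive poles, from $+\infty$ down to $0^+$ to the right of the largest pole $\alpha_{\max}$, and (by oddness) the mirror picture to the left of $-\alpha_{\max}$.

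Counting intersections of $y=q(x)/p(x)$ with the line $y=x$ then yields exactly $\deg p + 1$ simple real zeros of $H$: one in each interval between consecutive poles of $q/p$ (lying automatically in $(-2,2)$), together with one in each of $(\alpha_{\max},\infty)$ and $(-\infty,-\alpha_{\max})$. Hypothesis (\ref{E:limineq}) locates the intersection in $(\alpha_{\max},\infty)$ at some $\beta>2$: this is automatic when $\alpha_{\max}=2$ (the case $z-1\mid P$, where $q/p\to+\infty$ at $x=2$), and is the substantive content when $\alpha_{\max}<2$. The pair $\{\beta,-\beta\}$ then gives a reciprocal real pair $\{\tau,1/\tau\}$ with $\tau>1$ (via $\tau+1/\tau=\beta^2-2$), while the remaining $H$-root pairs $\{x,-x\}$ with $|x|<2$ give complex conjugate pairs of $F$-roots on $\mathbb{T}$. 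Hence the non-spurious part of $F$ is a monic reciprocal integer polynomial with $\tau$ a simple real root, $1/\tau$ its only other root outside $\mathbb{T}$, and all remaining roots simple and on $\mathbb{T}$; the factorisation observation in the introduction then gives $F=t(z)u(z)$ with $t$ the minimal polynomial of $\tau$ (a Salem number, or a reciprocal quadratic Pisot number when $\deg t=2$) and $u$ cyclotomic, which is exactly the claim.

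The main obstacle is the bookkeeping around $z=\pm 1$, where (\ref{E:xfromz}) degenerates: one must track the (anti)reciprocity and parity of $p,q,H$ in the two cases $z-1\mid P$ and $z-1\mid Q$ so that the $H$-root count agrees exactly with $\deg F$, and so that $\tau$ really is picked up as a simple root. Once this is set up, the argument is driven by the single analytic fact that a CC-interlacing quotient has positive residues, reducing the intersection count with $y=x$ to an elementary monotonicity argument.
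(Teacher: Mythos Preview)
Your argument is correct and follows essentially the same route as the paper's own proof: pass to the real quotient $q/p$, use the positivity of the residues to get monotone decrease between poles, count the crossings of $q(x)/p(x)=x$, and use (\ref{E:limineq}) to push the unique outer crossing past $x=2$. Your write-up is in fact more careful than the paper's about the bookkeeping near $z=\pm1$ (the (anti)reciprocity of $F$, the parity of $H$, and the mirror root $-\beta$), which the paper leaves largely implicit.
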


This is proved in \cite{MS2} using the transformation $x=z+1/z$.
It also follows from Theorem \ref{T:CCproductSalem} on taking
$P_1=P$, $Q_1=Q$, $P_2 = z+1$, $Q_2 = z-1$.  Nevertheless we give
a proof here, using the transformation $x=\sqrt{z}+1/\sqrt{z}$, as this
provides a model for later generalisations.

\begin{proof}
Suppose $P$ and $Q$ have degree $d$.  Since the real interlacing
quotient $q/p$ is decreasing (except for jumps at poles), the equation $q(x)/p(x)=x$ has
exactly one (simple) root between each pair of consecutive roots
of $p$ (these all lie in the interval $[-2,2]$).  The condition
(\ref{E:limineq}) implies the existence of exactly one solution to
$q(x)/p(x)=x$ in the interval $(2,\infty)$.  We have now accounted
for all the roots of $xp(x)-q(x)$, which is a monic polynomial
(given that $P$ is monic) of degree $d+1$ or $d+2$ (according as
$z-1\mid Q$ or $z-1\mid P$). Transforming back to the complex
world, we see that all but two of the solutions to
(\ref{E:CCSalem}) lie in $\mathbb{T}$, and these two are a
reciprocal pair $\{\tau,1/\tau\}$ with $\tau>1$. Clearing
denominators in (\ref{E:CCSalem}) gives a monic polynomial with
integer coefficients, and degree $d+1$ or $d+2$ as appropriate, so
we are done.
\end{proof}

The condition on $q/p$ at $x=2$ translates to
$\lim_{z\rightarrow1+} Q(z)/(z-1)P(z)>2$, which amounts to either
$P(1)=0$ or ($Q(1)=0$ and) $Q'(1)>2P(1)$.  Thus this condition can
be checked readily without computing $q$ and $p$.

For an example, take $P$ and $Q$ as in (\ref{E:Lehmer}).  We have
CC-interlacing, and also $P(1)=0$, and $P$ is monic.  Solving
(\ref{E:CCSalem}) gives the famous Lehmer polynomial $z^{10}+z^9 -
z^7 - z^6 - z^5 - z^4 - z^3 + z + 1$.

To see that cyclotomic factors may appear, consider $P(z) = z^{10}+z^7-z^3-1$, $Q(z) = 2z^{10} + z^8 +
2z^7 + z^6 + 2z^5 + z^4 + 2z^3 + z^2 + 2$.  Again we have $P$
monic and $P(1)=0$.  Now (\ref{E:CCSalem}) gives the four primitive
eighth roots of unity as solutions, as well as the degree-8 Salem
number with minimal polynomial $z^8 - 2z^7 - z^6 - 3z^4 - z^2 - 2z
+ 1$.

\subsection{A product construction}
The following extension of Theorem \ref{T:CCSalem} exploits two
CC-interlacing pairs $(P_1,Q_1)$ and $(P_2,Q_2)$.  Of course, after Lemma \ref{L:interlacingsum}, one
possible way of combining two such pairs is to write $P_1/Q_1 +
P_2/Q_2 = P_3/Q_3$, giving a third pair $(P_3,Q_3)$ that could be
used in Theorem \ref{T:CCSalem}.   Instead of the sum, we consider now the
product: this will no longer give CC-interlacing, but we can
still squeeze out Salem numbers.

\begin{theorem}\label{T:CCproductSalem}
Let $Q_1/P_1$ and $Q_2/P_2$ be two CC-interlacing quotients, with
$P_1$ and $P_2$ both monic.  Let $q_1/p_1$ and $q_2/p_2$ be the
corresponding real interlacing quotients.

(i) Suppose that
\[
\lim_{x\rightarrow 2+} \left(\frac{q_1(x)}{p_1(x)}-2\right)\left(\frac{q_2(x)}{p_2(x)}-2\right) < 1\,.
\]
Then the only solutions to  the equation
\[
\left(\frac{Q_1(z)}{(z-1)P_1(z)}-1-\frac{1}{z}\right)
\left(\frac{Q_2(z)}{(z-1)P_2(z)} - 1-\frac{1}{z}\right) = \frac{1}{z}
\]
are a Salem number (or a reciprocal quadratic Pisot number), its
conjugates, and possibly one or more roots of unity.

(ii) Suppose that
\[
\lim_{x\rightarrow 2+} \frac{q_1(x)q_2(x)}{p_1(x)p_2(x)}> 1\,.
\]
Then the only solutions to  the equation
\[
\frac{Q_1(z)Q_2(z)}{(z-1)^2P_1(z)P_2(z)} = \frac{1}{z}
\]
are a Salem number (or a reciprocal quadratic Pisot number), its
conjugates, and possibly one or more roots of unity.

\end{theorem}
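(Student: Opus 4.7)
The plan is to mirror the proof of Theorem \ref{T:CCSalem}: transform via $x = \sqrt{z} + 1/\sqrt{z}$ to the real line, locate the roots of the resulting real polynomial equation (one reciprocal pair outside $[-2,2]$ and the remainder inside), and translate back. Using the identities from \S\ref{SS:realiq}, namely $\sqrt{z}\,Q_i(z)/((z-1)P_i(z)) = q_i(x)/p_i(x)$ and $\sqrt{z}(1 + 1/z) = x$, part (i)'s equation becomes $(q_1/p_1 - x)(q_2/p_2 - x) = 1$ and part (ii)'s becomes $q_1(x)q_2(x) = p_1(x) p_2(x)$. Writing $g_i := q_i/p_i - x$ and $h_i := q_i/p_i$, each of $g_i$ and $h_i$ is odd, strictly decreasing on each of its branches, with all poles in $[-2,2]$, and $g_i(x) \to -\infty$, $h_i(x) \to 0^+$ as $x \to +\infty$. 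In particular both $g_1 g_2$ and $h_1 h_2$ are even in $x$, so the real root sets are symmetric under $x \mapsto -x$.

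Next I would show that on $(2,\infty)$ (a pole-free interval for $p_1 p_2$) the equation has exactly one simple solution. For part (ii), $h_1$ and $h_2$ are both positive and strictly decreasing, so $h_1 h_2$ strictly decreases from its limit $>1$ at $x = 2^+$ (the hypothesis) to $0$ at $+\infty$, crossing $y=1$ exactly once. For part (i), the hypothesis is $g_1 g_2 < 1$ at $x = 2^+$, while $g_1 g_2 \to +\infty$ at $+\infty$ (since both $g_i \to -\infty$); in the tail where both $g_i$ are negative the product $|g_1||g_2|$ is a product of strictly increasing positive functions and hence strictly increasing, giving a unique transverse crossing of $y=1$. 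A short case analysis on the signs of $g_1(2^+), g_2(2^+)$ rules out any further crossing closer to $x=2$ (when both are positive, the product is decreasing from $<1$ to $0$; when the signs differ, the product is negative). By evenness, a symmetric simple root lies in $(-\infty,-2)$. Under the $2$-to-$2$ map $x \leftrightarrow z$, the pair $\{\tau', -\tau'\}$ pulls back to a single reciprocal pair $\{\tau, 1/\tau\}$ with $\tau > 1$ real.

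For the remaining roots I would count interval-by-interval inside $[-2,2]$. The real polynomial $F(x)$ has degree $\deg p_1 + \deg p_2 + 2$ in part (i) and $\deg p_1 + \deg p_2$ in part (ii). After accounting for the pair $\pm \tau'$, the remaining $\deg p_1 + \deg p_2$ or $\deg p_1 + \deg p_2 - 2$ roots must be produced as real crossings of $y=1$ inside $[-2,2]$. Between consecutive poles of $p_1 p_2$ the product function is continuous and blows up at each endpoint with a sign determined by the finite factor; tracking these signs pole by pole yields the required number of crossings, and together with the degree bound forces \emph{all} roots of $F$ to be real and located as claimed. Finally, clearing denominators in $z$ gives a monic integer polynomial (monicity coming from $P_1, P_2$ being monic: the leading term is $(z^2-1)^2 P_1 P_2$ in part (i) and $(z-1)^2 P_1 P_2$ in part (ii)) with $\tau > 1$ a simple real root and all other roots in the closed unit disc, at least one on $\mathbb{T}$. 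The Salem-polynomial factorization discussed in the introduction then identifies it as the minimal polynomial of a Salem number (or reciprocal quadratic Pisot number) times a cyclotomic polynomial.

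The main obstacle is the interval-by-interval count inside $[-2,2]$: it requires systematic sign bookkeeping at each pole of $p_1 p_2$, with sub-cases for poles belonging only to $p_1$, only to $p_2$, or to both (as can happen when $z = \pm 1$ is shared between the two CC-interlacing pairs, so that $x = 0$ or $\pm 2$ is a pole of both $p_1$ and $p_2$). Everything else---the arguments outside $[-2,2]$, the simplicity of $\tau$, and the final Salem/cyclotomic identification---is a routine extension of the single-pair case.
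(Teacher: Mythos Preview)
Your overall strategy---transform to the real line via $x=\sqrt{z}+1/\sqrt{z}$, locate the roots of the resulting real polynomial, and transform back---is exactly the paper's. Your treatment of the interval $(2,\infty)$ is correct and, for part (i), your case analysis on the signs of $g_1(2^+),g_2(2^+)$ is arguably tidier than what the paper writes down.

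The gap is precisely where you place it: the interior count on $[-2,2]$. Your claim that ``tracking these signs pole by pole yields the required number of crossings'' does not go through as stated. Between two consecutive poles where $\psi_1\psi_2\to+\infty$ on \emph{both} sides---which certainly occurs at a shared pole of $p_1$ and $p_2$ (your own sub-case), and can occur at simple poles too whenever the finite factor has the same sign on both sides---the intermediate value theorem guarantees no crossing of $y=1$, yet there may well be two. So the lower bound you get from sign changes can fall strictly short of the degree, and the argument ``together with the degree bound forces all roots to be real'' collapses.

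The paper closes this gap not by case analysis but with a single observation, packaged as Lemma~\ref{L:product}: writing $\psi_i=q_i/p_i-ax$ (with $a=1$ for (i), $a=0$ for (ii)), on any interval where $\psi_1\psi_2>0$ the product is strictly monotone---decreasing if both factors are positive, increasing if both are negative. It follows that between any two consecutive poles the number of solutions to $\psi_1(x)\psi_2(x)=c$ is the \emph{same} for every real $c\ge0$. Taking $c=0$ reduces the interior count to counting zeros of the numerator $f$ (all real, all between the extreme poles, totalling $\deg f$); taking $c=1$ then gives exactly $\deg f$ solutions inside. With your two roots outside $[-2,2]$ this accounts for all $\deg(g-f)$ roots, and monicity finishes as you describe. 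This monotonicity-on-positive-regions observation is the one idea your proposal is missing; once you have it, the laborious pole-by-pole bookkeeping becomes unnecessary.
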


Part (i) extends an explicit formula arising from a certain family of Salem trees \cite[Lemma 7.1(ii)]{MS3}.
The proof makes use of the following
lemma.

\begin{lemma}\label{L:product}
Let $\psi_1(x)$ and $\psi_2(x)$ be rational functions in
$\mathbb{Z}(x)$, strictly decreasing on the real line (over
intervals for which they are defined), with simple zeros and
poles.  Write $\psi_1(x)\psi_2(x)=f(x)/g(x)$, where $f(x)$ and
$g(x)$ are coprime polynomials with integer coefficients.  Suppose
that $g(x)$ has real zeros at $a$ and $b$ (with $a<b$).  Then,
counted with multiplicity, the number of solutions to the equation
$\psi_1(x)\psi_2(x)=c$ for $x\in(a,b)$ is independent of real
$c\ge0$.
\end{lemma}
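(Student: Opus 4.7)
The plan is to show that $N(c)$, the number of $x \in (a,b)$ satisfying $\psi_1(x)\psi_2(x) = c$ counted with multiplicity, is constant on $[0,\infty)$; it will suffice to prove that every critical value of $h(x) := \psi_1(x)\psi_2(x)$ attained in $(a,b)$ is non-positive. First I would reduce to the case that $a$ and $b$ are consecutive real zeros of $g$; if they are not, one splits $(a,b)$ at the intermediate zeros of $g$ and applies the result on each sub-interval. Under this reduction $g$ is nonvanishing on $(a,b)$, so $h = f/g$ is smooth there, and $N(c)$ equals the number of real roots of $F_c(x) := f(x) - c\,g(x)$ in $(a,b)$, counted with multiplicity.

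Since $f, g$ are coprime with $g(a) = g(b) = 0$, we have $f(a), f(b) \ne 0$, so $F_c(a) = f(a)$ and $F_c(b) = f(b)$ are nonzero independent of $c$; hence no root of $F_c$ can cross $a$ or $b$ as $c$ varies. Since $(a,b)$ is bounded, roots of $F_c$ cannot escape to $\pm\infty$ either. Therefore $N(c)$ is locally constant in $c$ except when $c$ is a critical value of $h$ attained in $(a,b)$, the only places where a pair of real roots of $F_c$ inside $(a,b)$ can appear or disappear via coalescence with a complex conjugate pair.

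The core step is a sign analysis of $h' = \psi_1'\psi_2 + \psi_1\psi_2'$ using $\psi_1', \psi_2' < 0$ wherever defined. If $\psi_1(x_0), \psi_2(x_0)$ are both nonzero and of equal sign, then both summands of $h'$ share that sign, forcing $h'(x_0) \ne 0$; if they are of opposite signs, then $h(x_0) < 0$. If exactly one of $\psi_1, \psi_2$ has a (simple) zero at $x_0$, then $h$ has a simple zero at $x_0$ and $h'(x_0) \ne 0$. If both vanish at $x_0$, the Taylor expansion $h(x) = \psi_1'(x_0)\psi_2'(x_0)(x - x_0)^2 + O((x - x_0)^3)$ has positive leading coefficient, so $x_0$ is a strict local minimum of $h$ with value $0$. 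Finally, at an $x_0 \in (a,b)$ where a pole of one $\psi_i$ is cancelled by a zero of the other, $h(x_0) < 0$ because the residue at the pole is positive while the derivative at the zero is negative. Altogether, every critical value of $h$ in $(a,b)$ is either strictly negative or equals $0$.

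The main obstacle is ensuring that $N$ does not jump at the potential critical value $0$. At each common simple zero of $\psi_1$ and $\psi_2$ the positive-leading-coefficient expansion above shows that the local contribution to $N(c)$ equals $2$ both at $c = 0$ (a double root of $F_0$) and for small $c > 0$ (two nearby simple real roots of $F_c$); so $N$ is continuous through $c = 0$. Combined with the absence of critical values of $h$ in $(0,\infty)$, this yields the required constancy of $N$ on $[0,\infty)$.
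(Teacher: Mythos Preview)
Your proof is correct and follows essentially the same approach as the paper: both reduce to the interval between consecutive poles and hinge on the observation that wherever $\psi_1\psi_2>0$ the product is strictly monotonic (since $h'=\psi_1'\psi_2+\psi_1\psi_2'$ then has a definite sign), so that between successive poles each positive level $c>0$ is attained exactly as often as the level $c=0$. Your packaging via critical values and continuity of root counts is a little more formal than the paper's direct geometric description of the graph of $\psi_1\psi_2$, but the substance is identical.
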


It will be evident from the proof that all relevant solutions to
$\psi_1(x)\psi_2(x)=c$ are simple, except perhaps when $c=0$. It
is possible that $\psi_1\psi_2$ has one or more double zeros, but
it cannot have zeros of higher order. The application of interest
to us will use only that the number of solutions when $c=1$ is the
same as when $c=0$.

\begin{proof}
In intervals where $\psi_1\psi_2$ is positive, it is strictly
monotonic: it is decreasing if both $\psi_1$ and $\psi_2$ are
positive, and it is increasing if both are negative.  As $x$
passes through a zero $x=\alpha$ of $\psi_1\psi_2$, the function
either decreases from $\infty$ to $0$ as $x$ approaches $\alpha$
from below, or $\psi_1\psi_2$ increases from $0$ to $\infty$ as
$x$ increases from $\alpha$ (or both, in which case $\psi_1\psi_2$
has a double zero at $\alpha$: note that if both $\psi_1$ and
$\psi_2$ vanish at $\alpha$ then necessarily both have the same
sign in a punctured neighbourhood of $\alpha$). For any $c\ge0$ it follows that
between any successive poles of $\psi_1\psi_2$ the number of
solutions to $\psi_1(x)\psi_2(x)=c$ is independent of $c$. The
result follows.
\end{proof}

The proof of Theorem \ref{T:CCproductSalem} now follows.  We
take for $\psi_1$ and $\psi_2$ the rational functions
$q_1/p_1-ax$ and $q_2/p_2-ax$, where $a=1$ for part (i) and $a=0$ for part (ii).
These are decreasing where defined, since the $q_i/p_i$ are real interlacing quotients corresponding to CC-interlacing quotients.
Write $\psi_1(x)\psi_2(x)=f(x)/g(x)$,
after cancelling any common factors, so that $f$ and $g$ are
coprime polynomials with integer coefficients. Note that, from the remarks in Section \ref{SS:realiq},
$f(x)/g(x)$ is an even function.  The number of zeros of
$\psi_1\psi_2$ between its extreme poles is equal to the degree of
$f(x)$, since all roots are real. By Lemma \ref{L:product}, this equals the
number of solutions to $f(x)/g(x)=1$: all of these lie in the
interval $[-2,2]$. For part (i), $f/g\sim x^2\rightarrow\infty$ as $x\rightarrow\infty$; for part (ii), $f/g$ tends to a finite non-positive number as $x\rightarrow\infty$. The condition at $x=2$ ensures a solution to
$f(x)/g(x)=1$ in the interval $(2,\infty)$, and by evenness also
in $(-\infty,-2)$. Since $g(x)-f(x)$ is monic, and we have
accounted for all its roots, we are done when we transform back to
the complex world. As before, the condition at $x=2$ transforms to
an easily-checked condition at $z=1$.

\section{Pisot numbers via CC-interlacing}\label{S:CCPisots}
We now construct Pisot numbers by taking limits of convergent
sequences of Salem numbers.  There is a conjecture of Boyd
\cite[p.~327]{Bo} which, if true, would imply that this process will
always yield either a Salem number or a Pisot number.  Our results
in this paper give a confirmation of this conjecture for all the
cases considered.  In this section we consider CC-interlacing;
subsequently (\S\ref{S:CSSSconstructions}) we shall treat briefly the other flavours of
interlacing.

\subsection{CC-limit functions}

We define a \emph{CC-limit function} to be a rational function
$h(z)$ such that there is a sequence of CC-interlacing quotients
$(h_n(z))$ for which $h_n(z)/(z-1)$ converges to $h(z)$ uniformly
in any compact subset of the exterior of the unit disc. For
example, $1/z$ is a CC-limit function, as we could take $h_n(z) =
(z^n-1)(z-1)/(z^{n+1}-1)$; indeed in this case we have uniform
convergence in the set $|z|\ge 1+\varepsilon$, for any
$\varepsilon>0$.

\begin{lemma}\label{L:pisotquotients}
Take any non-negative integers $A$, $r_1$, $r_2$, $r_3$, $r_4$,
not all zero, and positive integers $A_i$, $a_i$ ($1\le i\le
r_1$), $B_i$, $b_i$ ($1\le i\le r_2$), $C_i$, $c_i$ ($1\le i\le
r_3$), $D_i$, $d_i$ ($1\le i\le r_4$). Then the rational function
\begin{equation}\label{E:pisotquotient}
\begin{array}{ll}\displaystyle \frac{A}{z-1} &\displaystyle +
\sum_{i=1}^{r_1}\frac{A_i(z^{a_i}-1)}{(z-1)z^{a_i}} +
\sum_{i=1}^{r_2}\frac{B_iz^{b_i}}{(z-1)(z^{b_i}-1)} \\ \\ &\displaystyle +
\sum_{i=1}^{r_3}\frac{C_i(z^{c_i}+1)}{(z-1)z^{c_i}} +
\sum_{i=1}^{r_4}\frac{D_iz^{d_i}}{(z-1)(z^{d_i}+1)}
\end{array}
\end{equation}
is a CC-limit function.
\end{lemma}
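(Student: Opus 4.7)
The plan is to exploit Lemma \ref{L:interlacingsum}(a), which ensures that the sum of two CC-interlacing quotients is itself CC-interlacing. Combined with linearity this shows that the set of CC-limit functions is closed under addition: given CC-interlacing sequences with $h_n^{(1)}/(z-1)\to h^{(1)}$ and $h_n^{(2)}/(z-1)\to h^{(2)}$ uniformly on compact subsets of $\{|z|>1\}$, the sum $h_n^{(1)}+h_n^{(2)}$ is CC-interlacing and its quotient by $z-1$ converges to $h^{(1)}+h^{(2)}$. Hence it suffices to treat the five shapes of summand in (\ref{E:pisotquotient}) one at a time.

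For each shape I would exhibit an explicit sequence, generalising the example $(z^n-1)(z-1)/(z^{n+1}-1)$ that the introduction uses to realise $1/z$ as a CC-limit function. The constant term $A/(z-1)$ is handled by $h_n(z)=A(z^{2n+1}-1)/(z^{2n+1}+1)$, whose numerator is antireciprocal and denominator reciprocal, both $\pm 1$ lie among the roots, the $(2n+1)$th and primitive $(4n+2)$th roots of unity interlace on $\mathbb{T}$, and $h_n\to A$ uniformly outside the unit disc. For the type-2 summand $A_i(z^{a_i}-1)/((z-1)z^{a_i})$ take
\[
h_n(z)=\frac{A_i(z^{na_i}-1)(z^{a_i}-1)}{z^{(n+1)a_i}-1},
\]
which after cancelling the common factor $z^{a_i}-1$ becomes the coprime pair $A_i(z^{na_i}-1)/(1+z^{a_i}+\cdots+z^{na_i})$ (antireciprocal over reciprocal), with roots at the $(na_i)$th and the non-$a_i$th $((n+1)a_i)$th roots of unity interlacing on $\mathbb{T}$; the identity $(z^{na_i}-1)/(z^{(n+1)a_i}-1)\to z^{-a_i}$ yields the required limit. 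The mirror construction
\[
h_n(z)=\frac{B_i(z^{(n+1)b_i}-1)}{(z^{nb_i}-1)(z^{b_i}-1)},
\]
which reduces to $B_i(1+z^{b_i}+\cdots+z^{nb_i})/(z^{nb_i}-1)$ (reciprocal over antireciprocal), disposes of type 3 by the same mechanism.

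The hard part will be types 4 and 5, because the factor $z^{c_i}+1$ (resp.\ $z^{d_i}+1$) is reciprocal, and the naive analogues of the preceding constructions produce (after cancellation) $Q_n$ and $P_n$ of the \emph{same} reciprocal type, which CC-interlacing forbids. My remedy would be to multiply by the auxiliary quotient $(z^M-1)/(z^M+1)$ with $M$ a large odd integer coprime to the exponents in play: this factor is antireciprocal over reciprocal, tends uniformly to $1$ on $\{|z|\ge 1+\varepsilon\}$, and flips the reciprocity parity of the whole expression without changing the limit. For type 4 this gives
\[
h_n(z)=\frac{C_i(z^{c_i}+1)(z^{nc_i}-1)(z^M-1)}{(z^{(n+1)c_i}-1)(z^M+1)},
\]
and the type-5 construction is analogous with the $\pm 1$ factors interchanged. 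What then remains is a parity-sensitive check that after cancellations the reduced $Q_n$ and $P_n$ are coprime, their roots on $\mathbb{T}$ are simple and interlace there, both $+1$ and $-1$ appear among them, and one polynomial is reciprocal while the other is antireciprocal. This bookkeeping is elementary via the angles of the relevant roots of unity but is where the argument becomes fiddly, and judicious choice of the parity of $n$ and of $M$ coprime to the other exponents is what makes it go through.
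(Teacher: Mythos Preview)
Your reduction via Lemma~\ref{L:interlacingsum}(a) to the five individual summands is exactly the paper's approach, and for the first three types your sequences are (up to reparametrisation) the ones the paper uses. The divergence is at types~4 and~5, and there you have made things harder than they are.

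The paper simply writes down, for each of the five shapes, a single family of CC-interlacing quotients whose interlacing property is already known from the Beukers--Heckman classification~\cite{BH} (equivalently, they all occur as graph quotients in~\cite{MS3}). For the fourth and fifth summands the families are
\[
\frac{C_i(z^{c_i}+1)(z^n-1)}{z^{\,n+c_i}+1}
\qquad\text{and}\qquad
\frac{D_i(z^{\,n+d_i}+1)}{(z^{d_i}+1)(z^n-1)}\,.
\]
In each case the numerator is antireciprocal and the denominator reciprocal (or vice versa), so the parity obstruction you encountered never arises; the interlacing on $\mathbb{T}$ is part of the cited classification, and the limits are immediate. Your ``naive analogue'' replaced $z^n$ by $z^{nc_i}$ and $z^{n+c_i}+1$ by $z^{(n+1)c_i}-1$, which is what forced both polynomials to be antireciprocal.

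Your proposed repair---multiplying by $(z^M-1)/(z^M+1)$---is where the genuine gap lies. Products of CC-interlacing quotients are not in general CC-interlacing (only sums are, by Lemma~\ref{L:interlacingsum}(a)), so there is no lemma to appeal to; you would have to verify the interlacing of the resulting root sets directly, with $M$ growing with $n$ and chosen coprime to several other exponents. You acknowledge this is ``fiddly'' but do not carry it out, and it is not clear it goes through without further ideas. The fix is simply to use the formulas above and cite~\cite{BH} or~\cite{MS3} for the interlacing, as the paper does.
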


\begin{proof}
Using the Beukers-Heckman classification \cite{BH} (and see also \cite{MS3}, where all these terms (or their reciprocals) appear as quotients of graphs (multiplied by $z-1$)) for interlacing
cyclotomic polynomials, and Lemma \ref{L:interlacingsum}(a), we
can define for each natural number $n$ a CC-interlacing quotient
$Q_n/P_n$ by
\begin{equation}\label{E:CCapprox}
\begin{array}{rcl}
\frac{Q_n(z)}{P_n(z)} &=&
\frac{A(z^n+1)}{z^n-1} +
\sum_{i=1}^{r_1}\frac{A_i(z^{a_i}-1)(z^n-1)}{z^{n+a_i}-1}
 +
\sum_{i=1}^{r_2}\frac{B_i(z^{n+b_i}-1)}{(z^{b_i}-1)(z^n-1)}\\ \\
&& + \sum_{i=1}^{r_3}\frac{C_i(z^{c_i}+1)(z^n-1)}{z^{n+c_i}+1}
 +
\sum_{i=1}^{r_4}\frac{D_i(z^{n+d_i}+1)}{(z^{d_i}+1)(z^n-1)}\,.
\end{array}
\end{equation}
An easy estimate shows that for any $\varepsilon>0$ the sequence
of functions $Q_n(z)/\bigl((z-1)P_n(z)\bigr)$ converges to the
advertised limit function, uniformly in $|z|\ge1+\varepsilon$.
\end{proof}

We shall call a rational function of the shape
(\ref{E:pisotquotient}) a \emph{special CC-limit function}.  For
these we can exploit their explicit form to prove that certain
limit points of the set of Salem numbers are in fact Pisot
numbers.

\subsection{A single interlacing quotient}
Given a single CC-interlacing quotient, we can take the limiting
form of our Salem number construction, and attempt to prove that
the limit is a Pisot number.

\begin{theorem}\label{T:CCPisot}
Let $Q/P$ be either a CC-interlacing quotient or zero ($Q=0$,
$P=1$), with $P$ monic, and put $g(z)=Q(z)/\bigl((z-1)P(z)\bigr)$.  Let $h(z)$ be
a special CC-limit function, as in (\ref{E:pisotquotient}).  Let
$f(z) = g(z) + h(z) - 1 - 1/z$ (if this has a removable
singularity at $z=0$, then remove it). If
\begin{equation}\label{E:zlimineq}
\lim_{z\rightarrow1+}\bigl(g(z)+h(z)\bigr)>2\,,
\end{equation}
then the only non-zero solutions to $f(z)=0$ are a Pisot number
$\theta$, the conjugates of $\theta$, and possibly some roots of
unity.
\end{theorem}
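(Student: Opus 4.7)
The plan is to realise $\theta$ as the limit of Salem numbers produced by Theorem~\ref{T:CCSalem}. By Lemma~\ref{L:pisotquotients}, write $h(z) = \lim_n h_n(z)$ with $h_n(z) = Q_n(z)/\bigl((z-1)P_n(z)\bigr)$, where $Q_n/P_n$ is a CC-interlacing quotient, $P_n$ is monic, and convergence is uniform on $\{|z| \ge 1+\varepsilon\}$ for every $\varepsilon>0$. Lemma~\ref{L:interlacingsum}(a) then gives a CC-interlacing quotient $R_n/S_n$ with $S_n$ monic satisfying $g + h_n = R_n/\bigl((z-1)S_n\bigr)$. Hypothesis~(\ref{E:zlimineq}), together with the observation that each approximating term in~(\ref{E:CCapprox}) has the same limit at $z=1+$ as the corresponding term of~(\ref{E:pisotquotient}) (finite or $+\infty$), ensures that hypothesis~(\ref{E:limineq}) of Theorem~\ref{T:CCSalem} holds for $(R_n, S_n)$ when $n$ is large. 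That theorem then produces a Salem (or reciprocal quadratic Pisot) number $\tau_n > 1$ with $(g+h_n)(\tau_n) = 1 + 1/\tau_n$.

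Next I would show $\tau_n \to \theta$ by passing to the Tchebyshev variable $x = \sqrt{z} + 1/\sqrt{z}$. Let $\tilde\psi_n$ be the real interlacing quotient attached to $R_n/S_n$; then $\tau_n$ corresponds to the unique $x_n > 2$ with $\tilde\psi_n(x_n) = x_n$, since $\tilde\psi_n$ is strictly decreasing on $(2, \infty)$ and tends to $0$ at infinity. The sequence $\tilde\psi_n$ converges uniformly on compact subsets of $(2, \infty)$ to a limit $\tilde\psi$ sharing those properties, and hypothesis~(\ref{E:zlimineq}) translates to $\lim_{x \to 2+}\tilde\psi(x) > 2$; so $\tilde\psi(x) = x$ has a unique solution $x_\infty > 2$, whence $x_n \to x_\infty$ and $\tau_n \to \theta$, where $\theta > 1$ is determined by $x_\infty = \sqrt{\theta} + 1/\sqrt{\theta}$. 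Clearing denominators in $f(z)=0$ produces a polynomial $F \in \mathbb{Z}[z]$ that is monic up to sign (as $f(z) \to -1$ when $|z| \to \infty$), and $F(\theta)=0$, so the minimal polynomial $m$ of the algebraic integer $\theta$ divides $F$.

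It remains to identify the non-zero zeros of $f$ as $\theta$, its conjugates (all with $|z|<1$), and possibly roots of unity. By Hurwitz's theorem applied to the uniform convergence $f_n \to f$ on each compact subset of $\{|z|>1\}$, and using that $f_n$ has the unique zero $\tau_n$ in $\{|z|>1\}$ (by Theorem~\ref{T:CCSalem}), $\theta$ is the only zero of $f$ in $\{|z|>1\}$. Writing $F = m F_2$, the polynomial $F_2 \in \mathbb{Z}[z]$ is (up to sign) monic with all roots in $\{|z| \le 1\}$, and Kronecker's theorem gives $F_2(z) = \pm z^k c(z)$ for some cyclotomic $c$. The decisive and most delicate step is excluding conjugates of $\theta$ from the unit circle $\mathbb{T}$, i.e.\ the Pisot-vs-Salem dichotomy: any such conjugate would be a zero of $f$ on $\mathbb{T}$ that is not a root of unity. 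One argues that any zero of $f$ on $\mathbb{T}$ is forced by the explicit cyclotomic structure of~(\ref{E:pisotquotient}) (via the fixed points of $\tilde\psi(x) = x$ on $(-2, 2)$, which correspond under Tchebyshev to roots of unity coming from the denominators $z^{b_i}-1$ and $z^{d_i}+1$ already present in the approximations) to itself be a root of unity. Hence no conjugate of the irrational $\theta$ lies on $\mathbb{T}$, and $\theta$ is Pisot, completing the proof.
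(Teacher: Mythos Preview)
Your overall architecture matches the paper's: approximate $h$ by CC-interlacing quotients $Q_n/P_n$, apply Theorem~\ref{T:CCSalem} to $g+h_n$ to get Salem numbers $\tau_n$, and pass to the limit to get a unique root $\theta>1$ of $f$ outside the disc. Whether you argue via Hurwitz in the $z$-plane (as the paper does, using Rouch\'e) or via the Tchebyshev picture on $(2,\infty)$ is immaterial; both work.

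The genuine gap is in the last step, where you must rule out that $\theta$ is a Salem number. Your claim that any zero of $f$ on $\mathbb{T}$ is a root of unity ``via the fixed points of $\tilde\psi(x)=x$ on $(-2,2)$, which correspond under Tchebyshev to roots of unity coming from the denominators $z^{b_i}-1$ and $z^{d_i}+1$'' conflates poles with fixed points. The denominators of $h$ produce \emph{poles} of $\tilde\psi$ at Tchebyshev images of roots of unity; the \emph{fixed points} of $\tilde\psi$ lie strictly between consecutive poles and have no a~priori reason to sit at roots of unity. If $\theta$ were Salem, its conjugates on $\mathbb{T}$ would furnish fixed points in $(-2,2)$ that are not roots of unity, and nothing you have said excludes this. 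So this step, as written, begs the question.

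The paper's argument here is different and is the key idea you are missing. Write $\tilde k(z)=k(1/z)/z$. If $z_0\in\mathbb{T}$ is a zero of $f$, then so is $1/z_0$ (real coefficients), and since $g$ and $1+1/z$ are invariant under $k\mapsto\tilde k$ one finds that $z_0$ is a zero of $h(z)-\tilde h(z)$. By Galois conjugation, $\theta$ would then also be a zero of $h-\tilde h$. For each of the five single-term building blocks of a special CC-limit function one checks directly that $h-\tilde h$ has no zeros in $|z|>1$, giving the desired contradiction. For a general multi-term $h$ the paper does \emph{not} argue directly: instead it peels off all but one term, replaces the rest by its CC-interlacing approximation, applies the single-term case to conclude the resulting root $\theta_n$ is Pisot, and then invokes Salem's theorem that $S$ is closed to deduce that the limit $\theta$ is Pisot. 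Both ingredients --- the $h-\tilde h$ trick and the bootstrap via closure of $S$ --- are essential and absent from your proposal.
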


Before proving this, let us make some remarks.  One possible
choice for $h(z)$ is $1/z$, giving simply $f(z)=g(z)-1$.  The
construction of Pisot numbers in \cite{MRS} is essentially that of
Theorem \ref{T:CCPisot} with $h(z)=k/z$ for some positive integer
$k$; the construction in \cite{MS2} uses
$h(z)=1/(z-1)$, which ensures that the condition
(\ref{E:zlimineq}) is satisfied. An application of Theorem
\ref{T:CCPisot} with the more interesting CC-limit function
$z^7/(z-1)(z^7-1)$ is given in \S\ref{SS:negtracePisot}, where it
is used to produce a Pisot number that has negative trace and
degree only $16$, a new record (for old records, see \cite{MRS},
\cite{McK}, \cite{MS2}).

\begin{proof}
For the special CC-limit function $h(z)$, as in
(\ref{E:pisotquotient}), let $Q_n(z)/P_n(z)$ be as in
(\ref{E:CCapprox}), and define $f_n(z) = g(z) + Q_n(z)/(z-1)P_n(z)
- 1 - 1/z$.  We have that for $|z|>1$ the function $f(z)$ is the limit
of the sequence $(f_n(z))$, with convergence uniform in compact
subsets of that region. Moreover, from Theorem \ref{T:CCSalem}
each $f_n(z)$ has a unique root $\tau_n$ in the exterior of the
unit disc, at least for all sufficiently large $n$, say $n\ge n_0$
(so that (\ref{E:limineq}) holds).

Note that $f(z)$ has no poles outside the unit disc, and has
finitely many zeros there (it cannot be identically zero, as the condition (\ref{E:zlimineq}) would then fail). The condition near $z=1$ gives
$\lim_{z\rightarrow1+}f(z)>0$, and we plainly have
$\lim_{z\rightarrow+\infty}f(z)=-1$.  So there is at least one
$\theta$ in the real interval $(1,\infty)$ such that
$f(\theta)=0$.

Take any circle, centred on $\theta$, with radius sufficiently
small that it lies outside the unit disc and such that no zeros of
$f$ other than $\theta$ lie in or on the circle.  For all
sufficiently large $n$, the function $f$ dominates $f_n-f$ on this
circle; hence by Rouch\'{e}'s Theorem (assuming also that $n\ge
n_0$) there is exactly one root of $f_n$ in this circle (for all
sufficiently large $n$), and this root must be $\tau_n$.  We
conclude that $\tau_n\rightarrow\theta$ as $n\rightarrow\infty$.
Since this Rouch\'{e} argument could be applied to any root of $f$
outside the unit disc, but the sequence $(\tau_n)$ has at most one
limit, we conclude that $\theta$ is the only root of $f$ outside
the unit disc.

We deduce that $\theta$ is either a Pisot number or a Salem
number, and the theorem will follow if we show that $\theta$ is
not a Salem number.  Suppose for a contradiction that $\theta$ is
a Salem number, and let $z_0$ be a conjugate of $\theta$ that lies
in $\mathbb{T}$.  For a rational function $k(z)$, write
$\tilde{k}(z)=k(1/z)/z$.  Since $f(z)$ has all coefficients real,
$\overline{z_0}=1/z_0$ is also a zero of $f(z)$.  Thus $z_0$ is a
zero of both $f(z) = g(z) + h(z) - 1 - 1/z$ and $\tilde{f}(z) =
g(z) + \tilde{h}(z) - 1 - 1/z$ (using here that $(z-1)g(z)$ is a
CC-interlacing quotient, so that $g(z)$ is a quotient of
reciprocal polynomials).  Thus $z_0$ is a zero of
$h(z)-\tilde{h}(z)$, and by Galois conjugation so is $\theta$.

For the five special cases $h(z)=1/(z-1)$, $(z^a-1)/(z-1)z^a$,
$z^b/(z-1)(z^b-1)$, $(z^c+1)/(z-1)z^c$, $z^d/(z-1)(z^d+1)$ one
checks explicitly that $h(z)-\tilde{h}(z)$ has no roots outside
the unit disc, giving the desired contradiction.  For the general
case, we appeal to Salem's theorem \cite{Sa} that the set of Pisot
numbers is closed.  Write $h(z) = h_0(z) + h_1(z)$, where $h_0(z)$
is a single term in (\ref{E:pisotquotient}), and $h_1(z)$ is the
rest.  Then take $Q_n/P_n$ as in the proof of Lemma
\ref{L:pisotquotients}, for the limit function $h_1$ (rather than
$h$).  Now for each sufficiently large $n$, we can apply our
special result to conclude that the unique root $\theta_n$ of
$g(z) + Q_n(z)/(z-1)P_n(z) + h_0(z)-1-1/z$ outside the unit disc
is a Pisot number.  (Here we use Lemma \ref{L:interlacingsum}(a)
again, to show that $(z-1)g(z) + Q_n(z)/P_n(z)$ is a
CC-interlacing quotient.)  Now another Rouch\'{e} argument shows
that $\theta$ is the limit of the $\theta_n$, so that Salem's
theorem gives that $\theta$ is a Pisot number.
\end{proof}

\subsection{A product of two quotients}

\begin{theorem}
Let $Q_2/P_2$ and $Q_1/P_1$ each be either a CC-interlacing
quotient or zero, with $P_1$ and $P_2$ both monic, and define (for $i=1$, $2$) $g_i(z) =
Q_i(z)/(z-1)P_i(z)$. Let $h_1$ be a special CC-limit function, and
let $h_2$ be either a special CC-limit function or zero.

(i) Suppose
that
\[
\lim_{z\rightarrow 1+} \bigl(g_1(z) + h_1(z) - 1-1/z\bigr)\bigl(g_2(z) + h_2(z)-1-1/z\bigr)<1\,.
\]
Then the only non-zero roots of the rational function
\[
f(z) = \bigl(g_1(z)+h_1(z) - 1-1/z\bigr)\bigl(g_2(z) + h_2(z)-1-1/z\bigr)-1/z
\]
are a certain Pisot number, $\theta$, its conjugates, and perhaps
some roots of unity.

(ii) Suppose
that
\[
\lim_{z\rightarrow 1+} \bigl(g_1(z) + h_1(z) \bigr)\bigl(g_2(z) + h_2(z))\bigr)>1\,.
\]
Then the only non-zero roots of the rational function
\[
f(z) = \bigl(g_1(z)+h_1(z) \bigr)\bigl(g_2(z) + h_2(z)\bigr)-1/z
\]
are a certain Pisot number, $\theta$, its conjugates, and perhaps
some roots of unity.

\end{theorem}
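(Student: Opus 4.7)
The plan is to mimic the proof of Theorem \ref{T:CCPisot}, with Theorem \ref{T:CCproductSalem} playing the role of Theorem \ref{T:CCSalem}. The argument splits naturally into an approximation stage, a Rouch\'{e} stage, and a Salem-closure reduction to a base case.

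First I would approximate each special CC-limit function $h_i$ by CC-interlacing quotients $Q_{i,n}(z)/\bigl((z-1)P_{i,n}(z)\bigr)$ exactly as in the proof of Lemma \ref{L:pisotquotients}, and combine them with the given $Q_i/P_i$ via Lemma \ref{L:interlacingsum}(a). This produces, for each sufficiently large $n$, two CC-interlacing quotients whose attached rational functions converge to $g_i + h_i$ uniformly on compact subsets of $\{|z| > 1\}$. The hypothesis at $z \to 1+$ persists for large $n$, so Theorem \ref{T:CCproductSalem}(i) (respectively (ii)) applies and produces a Salem number or reciprocal quadratic Pisot number $\tau_n$ as the unique root outside the closed unit disc of the corresponding approximant $f_n$.

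Next I would locate a real root $\theta > 1$ of $f$ and identify it with $\lim \tau_n$. Since $g_i(z), h_i(z) \to 0$ as $z \to \infty$, one reads off $f(z) \to 1$ (in part (i)) or $f(z) \sim -1/z$ (in part (ii)) as $z \to \infty$; combined with the hypothesis at $z \to 1+$, the intermediate value theorem supplies at least one $\theta \in (1, \infty)$ with $f(\theta) = 0$. A Rouch\'{e} argument on small circles around any root of $f$ outside the unit disc, using uniform convergence $f_n \to f$ on compact subsets of $\{|z| > 1\}$, shows that for all large $n$ a unique root of $f_n$ lies near that root; since $\tau_n$ is the only such root of $f_n$, $\theta$ is the unique root of $f$ outside the closed unit disc, and $\tau_n \to \theta$. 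In particular $\theta$ is the limit of Salem (or reciprocal quadratic Pisot) numbers.

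The third and main stage is to rule out that $\theta$ is a Salem number. Following the blueprint of Theorem \ref{T:CCPisot}, the argument reduces via Salem's theorem on the closedness of the set of Pisot numbers to the base case in which each $h_i$ is either a single term of the shape appearing in (\ref{E:pisotquotient}) or zero: write $h_i = h_{i,0} + h_{i,1}$ with $h_{i,0}$ a single term and $h_{i,1}$ the remainder, approximate only $h_{1,1}$ and $h_{2,1}$ by CC-interlacing quotients, apply the base case to obtain a sequence of Pisot numbers converging to $\theta$, and conclude that $\theta$ is Pisot. The main obstacle is the base case. Here one introduces $\tilde{k}(z) = k(1/z)/z$ and uses $\tilde{g}_i = g_i$ (since $(z-1)g_i$ is a CC-interlacing quotient) to show that a hypothetical Salem conjugate $z_0 \in \mathbb{T}$ of $\theta$ would force $f$ and its tilde-transform to vanish simultaneously at $z_0$, yielding by Galois conjugation a polynomial identity for $\theta$ involving only $h_1 - \tilde{h}_1$, $h_2 - \tilde{h}_2$, $g_1$, and $g_2$. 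A finite case check across the five shapes of single term in (\ref{E:pisotquotient}) for each of $h_1$ and $h_2$ then contradicts the location of $\theta$. The bookkeeping, now multiplicative in the number of single-term shapes, is more elaborate than in Theorem \ref{T:CCPisot}, but each individual verification is as direct.
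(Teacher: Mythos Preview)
Your overall architecture---approximate the $h_i$ by CC-interlacing quotients via Lemma \ref{L:pisotquotients}, feed the resulting pairs into Theorem \ref{T:CCproductSalem}, locate a real root $\theta>1$ of $f$, pin it down as the unique exterior root by Rouch\'{e}, and then exclude the Salem possibility by a Salem-closure reduction to a finite base case---matches the paper's proof exactly. The defect is in your choice of base case.

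You reduce only to the situation where each $h_i$ is a single term of the shape in (\ref{E:pisotquotient}), and then assert that ``each individual verification is as direct'' as in the proof of Theorem \ref{T:CCPisot}, merely with multiplicative bookkeeping. That assertion is not justified. Writing $G_i = g_i - a_i(1+1/z)$ and $F_i = G_i + h_i$, the relevant subtraction is
\[
f(1/z)/z^2 - f(z) \;=\; \tilde F_1\tilde F_2 - F_1F_2 \;=\; \tilde F_1(\tilde h_2 - h_2) + F_2(\tilde h_1 - h_1)\,,
\]
which when both $h_1$ and $h_2$ are nonzero is a genuine \emph{sum}, not a product. Knowing that each $h_i-\tilde h_i$ has no zeros outside the unit disc no longer forces $\theta$ to be a root of a single controllable factor; you would have to analyse this two-term expression across all $5\times 5$ pairs of shapes, and nothing in your sketch explains how.

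The paper's remedy is a small sharpening of the same closure step, and you are one move away from it: in the reduction, approximate \emph{all} of $h_2$ (not $h_2$ minus one term) by a CC-interlacing quotient and absorb it into $g_2$ via Lemma \ref{L:interlacingsum}(a). The base case then has $h_2=0$ and $h_1$ a single term. Now $\tilde F_2 = F_2 = G_2$, the subtraction above collapses to $(h_1-\tilde h_1)\,G_2$, and one is back to the five-case check on $h_1-\tilde h_1$ already carried out in the proof of Theorem \ref{T:CCPisot}, together with the observation that the unique exterior zero of $G_2=g_2-a_2(1+1/z)$ (when $a_2=1$; there is none when $a_2=0$) cannot be a zero of $f$, since at such a point $f$ takes the value $-1/z\neq 0$.
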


\begin{proof}
The proof is very similar to that of Theorem \ref{T:CCPisot}, so
we merely spell out the differences.  We again use closure of $S$
to reduce to the special case where $h_2(z)=0$ and $h_1(z)$ is one
of the five special functions $1/(z-1)$, $(z^a-1)/(z-1)z^a$,
$z^b/(z-1)(z^b-1)$, $(z^c+1)/(z-1)z^c$, $z^d/(z-1)(z^d+1)$.  Any
Salem number that is a root of $f(z)$ is also a root of
$f(1/z)/z^2$, so is a common root of $\bigl(g_1(z) + h_1(z)-a_1(1+1/z)\bigr)\bigl(g_2(z)-a_2(1+1/z)\bigr) -
1/z$ and $\bigl(g_1(z)+\tilde{h}_1(z)-a_1(1+1/z)\bigr)\bigl(g_2(z)-a_2(1+1/z)\bigr)-1/z$, so is a root of
$\bigl(h_1(z)-\tilde{h}_1(z)\bigr)\bigl(g_2(z)-a_2(1+1/z)\bigr)$.
As before, $h_1-\tilde{h}_1$ has no zeros outside the unit disc.  Here $g_2(z)-a_2(1+1/z)$ has a single zero outside the unit disc, but we see from the definition of $f$ that this cannot be a zero of $f$.
\end{proof}

\section{Salem and Pisot numbers via
CS/SS-interlacing}\label{S:CSSSconstructions}
Several of the results of the previous section extend to obvious analogues for CS- and SS-interlacing quotients.
We record these here briefly.

\subsection{Salem numbers}
The analogue of Theorem \ref{T:CCSalem} for CS-interlacing
quotients is obvious from a sketch of the graph of $q(x)/p(x) $.
Indeed necessarily one has $q(2)\ge0$ and
$p(2)<0$, so that $q(2)/p(2)\le 0$, making a single root of $q(x)/p(x)=x$ in $(2,\infty)$ automatic.

\begin{theorem}\label{T:CSSalem}
Let $Q/P$ be a CS-interlacing quotient, with the
additional constraint that $P$ is monic.  Then the only solutions
to the equation (\ref{E:CCSalem}) are a Salem number (or a
reciprocal quadratic Pisot number), its conjugates, and possibly
one or more roots of unity.
\end{theorem}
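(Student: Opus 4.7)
The plan is to reproduce the structure of the proof of Theorem \ref{T:CCSalem}. Pass to the real world via $x = \sqrt{z} + 1/\sqrt{z}$ and let $q(x)/p(x)$ be the real interlacing quotient associated to $Q(z)/\bigl((z-1)P(z)\bigr)$. Writing $d = \deg P = \deg Q$, \S\ref{SS:realiq} gives $\deg q = d - 1$ and $\deg p = d$ (because $z - 1 \mid Q$), and perfect interlacing of zeros and poles makes $q/p$ strictly decreasing between consecutive poles, blowing up to $\mp\infty$ from below and above each one. The poles of $q/p$ consist of $d - 2$ points inside $(-2, 2)$ together with an outer pair $\pm x_\theta$, where $x_\theta = \sqrt{\theta} + 1/\sqrt{\theta} > 2$ and $\theta > 1$ is the larger positive real root of $P$.

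Next I would count real solutions of $q(x) = x\,p(x)$. Since $P$ is monic, $xp(x) - q(x)$ is monic of degree $d + 1$. The middle region $(-x_\theta, x_\theta)$ is split into $d - 1$ sub-intervals by the interior poles; on each, $q/p$ sweeps from $+\infty$ down to $-\infty$ while $y = x$ stays bounded, giving exactly one crossing per sub-interval. On the outer interval $(x_\theta, \infty)$ the quotient decreases from $+\infty$ towards $0^+$ while $x$ increases to $\infty$, yielding one further crossing; by oddness, one more on $(-\infty, -x_\theta)$. That is $d + 1$ real crossings, filling the degree of $xp(x) - q(x)$. The preamble observation $q(2) \ge 0$, $p(2) < 0$ (a direct consequence of $z^2 - 1 \mid Q$ together with the position of $\theta$) is exactly what guarantees the crossing in $(x_\theta, \infty)$: it replaces the hypothesis (\ref{E:limineq}) of Theorem \ref{T:CCSalem}, and no extra assumption is needed.

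Finally, transform back. Clearing denominators in (\ref{E:CCSalem}) gives the monic integer polynomial $F(z) = (z^2 - 1)P(z) - zQ(z)$ of degree $d + 2$, with $F(0) = -P(0) = -1 \neq 0$ because $P$ is monic and reciprocal. The factor $z^2 - 1$ divides $F$ (since $z^2 - 1 \mid Q$), contributing the roots $z = \pm 1$. The two real $x$-roots outside $[-2, 2]$ lift under the Tchebyshev correspondence to a reciprocal pair $\tau, 1/\tau$ with $\tau > 1$, and the $d - 1$ interior $x$-roots lift to roots of $F$ on $\mathbb{T}$; together with the $\pm 1$ contributed by $z^2 - 1$ this accounts for all $d + 2$ roots of $F$. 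Thus $F$ is monic with integer coefficients, has a simple real root $\tau > 1$, has all other roots of modulus at most $1$ with at least one on $\mathbb{T}$, and has non-zero constant term; the standard argument rehearsed in the introduction then factors $F(z) = t(z) u(z)$ with $u$ cyclotomic and $t$ the minimal polynomial of a Salem number or a reciprocal quadratic Pisot number, as required. The main delicacy I would check carefully is precisely this $x$-to-$z$ book-keeping: that the Tchebyshev lift of the $d + 1$ real $x$-roots, combined with the spurious root $z = 1$ introduced by clearing denominators and the root $z = -1$ that may come either from the factor $z + 1$ of $Q$ or (depending on parity) from an $x$-root at $0$, correctly exhausts the $d + 2$ roots of $F$ with the claimed location.
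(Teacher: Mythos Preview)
Your approach matches the paper's (transform to the real quotient $q/p$, count the roots of $q(x)/p(x)=x$, then lift back), and is essentially correct, but there is one small gap. You place $d-1$ crossings in the middle region $(-x_\theta,x_\theta)$ and one each beyond $\pm x_\theta$, and then assert that the middle ones lift to $\mathbb{T}$. But the rightmost middle sub-interval---between the largest pole in $(-2,2)$ and $x_\theta$---extends past $x=2$, and you have not ruled out that its crossing lies in $(2,x_\theta)$. The fact $q(2)/p(2)\le 0$ that you quote is precisely what settles this: since $q/p$ is strictly decreasing and already $\le 0<2$ at $x=2$, the crossing on that sub-interval must lie to the left of $2$. (The crossing in $(x_\theta,\infty)$, by contrast, is automatic from the graph there and needs no hypothesis; you have misattributed the role of $q(2)/p(2)\le 0$.)

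On the final bookkeeping: your count double-counts $z=-1$. In the CS setting $d$ is necessarily even (else the reciprocal $P$ and antireciprocal $Q$ would both vanish at $-1$, contradicting coprimality), so $xp-q$ is odd and one of your $d-1$ interior $x$-roots is $x=0$, which already lifts to $z=-1$. The only genuinely extra root introduced by clearing the $(z-1)$ denominator is $z=1$; together with the $d-1$ lifts on $\mathbb{T}$ (one of which is $z=-1$) and the pair $\tau,1/\tau$, that gives exactly the $d+2$ roots of $F$. You correctly flagged this as the delicacy to check, and that is its resolution.
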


For SS-interlacing quotients, a sufficient condition that there
should be a unique solution to $q(x)/p(x)=x$ in the interval
$(2,\infty)$ is that $q(2)/p(2)\le 2$ (type 1) or $q(2)/p(2)<2$
(type 2).  For type $2$ SS-interlacing the stated condition is not always necessary: it might be possible to have $q(2)/p(2)=2$, depending on the derivative of $q/p$ at $x=2$. But it is simpler to restrict to a strong inequality.

\begin{theorem}\label{T:SSSalem}
Let $Q/P$ be an SS-interlacing quotient (of either type), with the
additional constraint that $P$ is monic.  Suppose further that
\begin{equation}\label{E:SalemviaSS}
\lim_{z\rightarrow 1+}\frac{Q(z)}{(z-1)P(z)}<2\,.
\end{equation}
Then the only solutions to the equation (\ref{E:CCSalem}) are a
Salem number (or a reciprocal quadratic Pisot number), its
conjugates, and possibly one or more roots of unity.  For type 1
SS-interlacing, a weak inequality in (\ref{E:SalemviaSS}) would
suffice.
\end{theorem}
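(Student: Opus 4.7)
I would mimic the proof of Theorem~\ref{T:CCSalem}. Transform equation~(\ref{E:CCSalem}) via $x=\sqrt{z}+1/\sqrt{z}$ to the real equation $q(x)/p(x)=x$, equivalently $xp(x)-q(x)=0$; since $P$ is monic, this is a monic integer polynomial of degree $\deg p+1$. The hypothesis (\ref{E:SalemviaSS}) on $\lim_{z\to 1+}Q(z)/((z-1)P(z))$ translates to $\lim_{x\to 2+}q(x)/p(x)<2$ (or $\le 2$ for type~1). The goal is to show that $xp-q$ has all roots real, with exactly one in $(2,\infty)$, one in $(-\infty,-2)$, and the remaining $\deg p-1$ in $[-2,2]$. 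Transforming back via the quadratic relation between $z$ and $x$ then identifies the single real root $\theta>2$ with a reciprocal pair $\{\tau,1/\tau\}$ with $\tau>1$ in the complex world, and real roots in $[-2,2]$ with roots on $\mathbb{T}$; clearing denominators yields the monic integer polynomial $(z^2-1)P(z)-zQ(z)$, of degree $d+2$, which by the structure result recalled in the introduction is a Salem polynomial.

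For type~1 SS-interlacing, the real interlacing quotient $q/p$ is strictly decreasing on each interval between consecutive poles (all residues in its partial fraction expansion are positive), so $q(x)/p(x)=x$ has exactly one solution between any two consecutive poles, and exactly one solution in each of the two unbounded tails (where $q/p\to 0^{\pm}$). Counting gives $\deg p+1$ real solutions, accounting for every root of $xp-q$. The inequality $q(2)/p(2)\le 2$ forces the crossing in the gap straddling $x=2$ to lie at $x\le 2$, leaving the crossing in $(\alpha,\infty)$ (where $\alpha>2$ is the largest pole, coming from $P$'s real root outside $\mathbb{T}$) as the unique root exceeding~$2$; the mirror crossing in $(-\infty,-\alpha)$ gives the unique root less than~$-2$.

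For type~2, write $\alpha$ for the unique pole of $q/p$ in $(2,\infty)$ and $\gamma$ for the unique zero of $q/p$ in $(2,\infty)$ (these are the images of $P$'s and $Q$'s real roots outside $\mathbb{T}$, respectively); type~2 corresponds to $\gamma>\alpha$. The partial fraction expansion of $q/p$ has exactly two negative residues, at the extreme poles $\pm\alpha$, and $(q/p)'$ changes sign only twice in total, both sign changes occurring in the ``blip'' intervals outside $[-2,2]$. Consequently, within $[-2,2]$ the function $q/p$ is still strictly decreasing, and the type~1 counting gives the expected $\deg p-3$ crossings in the internal gaps of $[-2,2]$. In $(2,\infty)$ the strict hypothesis together with $p(2)<0$ (which follows from the placement of $p$'s real roots: one at $\alpha>2$ and the rest in $[-2,2]$) gives $(xp-q)(2)<0$, while direct sign computations using the ordering $2<\alpha<\gamma$ of the extreme real roots yield $(xp-q)(\alpha)=-q(\alpha)>0$, $(xp-q)(\gamma)=\gamma p(\gamma)>0$, and $(xp-q)(x)\to+\infty$ as $x\to\infty$. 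Combined with the mirror-image picture on the negative real axis (by the parity of $xp-q$), these evaluations exhibit $\deg p+1$ real roots, forcing all roots of $xp-q$ to be real and the root in $(2,\infty)$ to be unique.

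The main technical obstacle is the uniqueness count for type~2: because $q/p$ is not monotonic across $(2,\infty)$, extra solutions to $q/p=x$ could in principle appear in $(2,\alpha)\cup(\gamma,\infty)$. The strict inequality is essential here, since at equality $q(2)/p(2)=2$ the local minimum of $q/p$ on $(2,\alpha)$ could touch the line $y=x$ tangentially, producing a double real root and breaking the counting argument; this is exactly the borderline phenomenon alluded to in the paragraph preceding the theorem statement.
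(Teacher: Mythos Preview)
Your approach is the paper's: it gives no formal proof, only the sentence preceding the theorem, and implicitly relies on the model of Theorem~\ref{T:CCSalem} (transform to the real world and count roots of $xp-q$).  Your type~1 argument is fine.

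For type~2, however, your root count is short by two.  You exhibit $\deg p-3$ crossings in the gaps $(\beta_i,\beta_{i+1})$ strictly inside $[-2,2]$, then one in $(2,\alpha)$ from $(xp-q)(2)<0<(xp-q)(\alpha)$, and one in $(-\alpha,-2)$ by parity; that is $\deg p-1$ roots, not $\deg p+1$.  The missing pair live in $(\beta_{\deg p-2},2)$ and $(-2,\beta_1)$, and you have not evaluated $xp-q$ at the extremal interior poles.  The fix is immediate: $(xp-q)(\beta_{\deg p-2})=-q(\beta_{\deg p-2})>0$, because the interlacing inside $[-2,2]$ forces the largest zero of $q$ in $[-2,2]$ to lie below $\beta_{\deg p-2}$, while the next zero of $q$ above is $\gamma>\alpha>\beta_{\deg p-2}$, so $q(\beta_{\deg p-2})<0$.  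Together with $(xp-q)(2)<0$ this gives a root in $(\beta_{\deg p-2},2)$, and parity supplies the partner in $(-2,\beta_1)$.  Now you genuinely have $\deg p+1$ real roots, the degree of $xp-q$, so the root in $(2,\alpha)$ is the unique one exceeding~$2$.

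Two smaller points.  First, your assertion that $(q/p)'$ ``changes sign only twice in total, both sign changes occurring \dots\ outside $[-2,2]$'' is not justified (and in fact the derivative changes sign on each side of the largest pole $\alpha$, hence four times altogether by parity); fortunately monotonicity on $[-2,2]$ is not needed once you argue by sign changes of $xp-q$ at the poles, as above.  Second, the cleared polynomial has degree $d+1$ or $d+2$ according as $z-1\mid Q$ or $z-1\mid P$, not always $d+2$.
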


There is no analogue of Theorem \ref{T:CCproductSalem}, as the construction would give two roots
outside the unit disc.
\subsection{Pisot numbers}
Certain limiting cases of Theorems \ref{T:CSSalem} and
\ref{T:SSSalem} yield Pisot numbers. Armed with Lemma
\ref{L:interlacingsum}(b), we can give the analogue of Theorem
\ref{T:CCPisot} in this setting.

\begin{theorem}\label{T:SSPisot}
Let $Q/P$ be either a CS-interlacing quotient or an SS-interlacing
quotient, with $P$ monic, and put $g(z)=Q(z)/\bigl((z-1)P(z)\bigr)$. Let $h(z)$ be
a special CC-limit function, as in (\ref{E:pisotquotient}). Let
$f(z) = g(z) + h(z) - 1 - 1/z$ (if this has a removable
singularity at $z=0$, then remove it). If
$$\lim_{z\rightarrow1+}\bigl(g(z)+h(z)\bigr)<2\,,$$
then the only non-zero solutions to $f(z)=0$ are a Pisot number
$\theta$, the conjugates of $\theta$, and possibly some roots of
unity.
\end{theorem}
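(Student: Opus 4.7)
The plan is to mirror the proof of Theorem \ref{T:CCPisot}, with Theorem \ref{T:CSSalem} or \ref{T:SSSalem} playing the role of Theorem \ref{T:CCSalem}, and Lemma \ref{L:interlacingsum}(b) replacing (a). Using Lemma \ref{L:pisotquotients}, write $h$ as the uniform limit, on compact subsets of $\{|z|\ge 1+\varepsilon\}$, of $h_n(z)=Q_n(z)/\bigl((z-1)P_n(z)\bigr)$, where $Q_n/P_n$ is the CC-interlacing quotient given by (\ref{E:CCapprox}), and each $P_n$ is monic. Set $f_n(z)=g(z)+h_n(z)-1-1/z$, so that $f_n\to f$ uniformly on the same sets. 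By Lemma \ref{L:interlacingsum}(b), $(z-1)g(z)+Q_n(z)/P_n(z)$ is a CS- or SS-interlacing quotient $\hat Q_n/\hat P_n$ of the same flavour (and, in the SS case, type) as $Q/P$, with $\hat P_n=PP_n$ monic. The hypothesis $\lim_{z\to 1+}(g+h)<2$, combined with an analysis of the leading singular part of $h_n$ near $z=1$ via (\ref{E:CCapprox}), ensures that the analogous inequality (\ref{E:SalemviaSS}) holds for $\hat Q_n/\hat P_n$ for all sufficiently large $n$, so Theorem \ref{T:CSSalem} or \ref{T:SSSalem} produces a unique exterior root $\tau_n$ of $f_n$---a Salem number or reciprocal quadratic Pisot number.

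Next I locate a real zero $\theta>1$ of $f$ by a sign analysis on the positive real axis. The hypothesis yields $\lim_{z\to 1+}f(z)<0$, and $\lim_{z\to+\infty}f(z)=-1$; however $g$ has a simple pole at the positive real root $\theta_P>1$ of $P$, and this pole forces a sign change of $f$ in $(1,\infty)$. In the CS and type-$1$-SS cases one computes $g(\theta_P+)=+\infty$, producing a zero of $f$ in $(\theta_P,\infty)$; in the type-$2$-SS case the signs of $P$ and $Q$ near $\theta_P$ are reversed, giving $g(\theta_P-)=+\infty$ instead, so the zero of $f$ lies in $(1,\theta_P)$. Call this real zero $\theta$. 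A Rouch\'e argument on a small circle centred at $\theta$, together with the uniform convergence $f_n\to f$, gives a unique zero of $f_n$ near $\theta$ for large $n$; this must be $\tau_n$, so $\tau_n\to\theta$. The same Rouch\'e argument applied around any other candidate exterior zero of $f$ would produce a second limit for $(\tau_n)$, which is impossible, so $\theta$ is the unique exterior zero of $f$.

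Finally I rule out $\theta$ being a Salem number via the ``$h-\tilde h$'' device from the proof of Theorem \ref{T:CCPisot}. A direct computation in each of the CS and SS cases---regardless of which of $P,Q$ is the reciprocal one---shows that $g(z)$ is the quotient of two reciprocal polynomials of degrees differing by one, so $g(1/z)=z\,g(z)$ and hence $\tilde g=g$, where $\tilde F(z):=F(1/z)/z$. A unimodular Galois conjugate $z_0$ of a putative Salem $\theta$ would therefore be a root of both $f$ and $\tilde f$, hence of $h-\tilde h$; by Galois conjugation $\theta$ would itself be such a root. A direct computation on each of the five special summands in (\ref{E:pisotquotient}) shows that $h_0-\tilde h_0$ has no zero outside $\mathbb{T}$, giving the required contradiction when $h$ consists of a single summand; the general case is reduced to this by splitting off a single summand, approximating only the rest by CC-interlacing quotients, realising $\theta$ as a limit of Pisot numbers and invoking Salem's theorem \cite{Sa} on the closedness of $S$. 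The main obstacle is the verification that the hypothesis at $z=1$ is inherited by the approximants $h_n$, whose pole order at $z=1$ typically differs from that of $h$; the sign analysis around $\theta_P$ in the type-$2$-SS case also requires care, because the direction of blow-up of $g$ there is reversed from the other cases.
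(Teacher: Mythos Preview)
Your proposal is correct and follows the same approach as the paper, whose proof simply says ``This is much as before'' (i.e., as in Theorem~\ref{T:CCPisot}) while noting that $f$ and each $f_n$ now share a pole outside the unit disc at the Salem zero of $P$, which the Rouch\'e circles must be chosen small enough to avoid. You spell out more detail than the paper does---in particular the existence of $\theta$ via the sign change forced by the pole at $\theta_P$ (needed because the sign of $f$ at $z\to1^+$ is now negative rather than positive), and the inheritance of the $z=1$ inequality by the approximants---but these are precisely the points the paper's ``as before'' glosses over; your flagged ``main obstacle'' is in fact mild, since in the CS case Theorem~\ref{T:CSSalem} imposes no condition at $z=1$, and in the SS case the hypothesis already forces $g(1^+)$ and $h(1^+)$ to be finite.
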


\begin{proof}
This is much as before, but now $f$, and each $f_n$ in the
sequence of functions converging to $f$, has a pole outside the
unit disc (the same pole for each $f_n$ and for $f$, corresponding
to the Salem zero of $P$). When considering circles centred on
roots of $f$ outside the unit disc, the radii must be sufficiently
small to avoid enclosing this pole.
\end{proof}

\section{All Pisot numbers via interlacing}\label{S:allPisots}

In this section we shall show that all Pisot numbers are produced
by a special case of Theorem \ref{T:SSPisot} (Theorem
\ref{T:allPisots} below).  We proceed in three steps: in
\S\ref{SS:Pk} we define a sequence of polynomials
$(P_k)_{k\ge0}$, following Salem; in \S\ref{SS:Winding} we
show that for all sufficiently large $k$ the pair $(P_k,P_{k+1})$ is an SS-interlacing quotient (that these polynomials are Salem polynomials is contained in Salem's work---the novelty here is in establishing the interlacing property); in \S\ref{SS:allPisots} we tie
everything together to produce Theorem \ref{T:allPisots}.

For any polynomial $A(z)\in \mathbb{Z}[z]$ of exact degree $d$, define $A^*(z) = z^dA(1/z)$.

\subsection{The polynomials $P_k$}\label{SS:Pk}
\begin{lemma}\label{L:Pk}
Let $A(z)$ be any polynomial of degree $d$ with integer
coefficients. 
For $k\ge0$, define
$P_k(z) = \bigl(z^kA(z) - A^*(z)\bigr)/(z-1)$\,.  Then for $k\ge0$ we have \begin{equation}\label{E:A}
z^kA(z) = P_{k+1}(z) - P_k(z)\,.
\end{equation}
If $k\ge 1$, then the polynomial $P_k$ has degree $d+k-1$.  If $k\ge 1$, then $P_k$ is a reciprocal polynomial; $P_0$ is a power of $z$ times a reciprocal polynomial.  The polynomials $P_k$ satisfy the recurrence
\begin{equation}\label{E:recurrence}
P_{k+2}-(z+1)P_{k+1} + zP_k  = 0\,,
\end{equation}
for $k\ge 0$.
For each $k\ge1$, the pair of polynomials $(P,Q)=(P_{k+1},P_{k})$ is the unique pair of reciprocal polynomials such that the degrees of $P$ and $Q$ are $d+k$ and $d+k-1$ and such that $z^kA(z) = P(z) - Q(z)$.
\end{lemma}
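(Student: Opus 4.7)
The plan is that every claim follows from direct algebra using the defining formula together with the two basic identities $A(1/z) = z^{-d}A^*(z)$ and $A^*(1/z) = z^{-d}A(z)$. First I would verify (\ref{E:A}) by telescoping:
\[
P_{k+1}(z) - P_k(z) = \frac{(z^{k+1}-z^k)A(z)}{z-1} = z^k A(z)\,.
\]
(This simultaneously confirms that $z-1$ divides $z^kA - A^*$, since both sides equal $A(1)$ at $z=1$.) For $k \ge 1$, the polynomial $z^kA$ has degree $d+k \ge d+1$, strictly exceeding $\deg A^* \le d$, so the numerator $z^kA - A^*$ has degree exactly $d+k$, whence $\deg P_k = d+k-1$.

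For reciprocity, I would compute directly, using $A(1/z) = z^{-d}A^*(z)$ and $A^*(1/z) = z^{-d}A(z)$ in the defining formula:
\[
P_k(1/z) = \frac{z^{-k}A(1/z) - A^*(1/z)}{1/z - 1} = \frac{z\bigl(z^{-k-d}A^*(z) - z^{-d}A(z)\bigr)}{1-z} = z^{1-d-k}P_k(z)\,,
\]
so $z^{d+k-1}P_k(1/z) = P_k(z)$. For $k \ge 1$ this is exactly reciprocity at the true degree $d+k-1 = \deg P_k$. For $k = 0$ the ``nominal'' degree $d-1$ may exceed the true degree of $P_0$; writing $P_0(z) = z^m R(z)$ with $R(0) \ne 0$ and $\deg R = e$, the identity $z^{d-1}P_0(1/z) = P_0(z)$ forces $m + e = d-1$ and $R^* = R$, giving the stated ``power of $z$ times a reciprocal polynomial'' decomposition.

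The recurrence (\ref{E:recurrence}) is a one-line verification: multiplying through by $z-1$, the $z^kA$ contributions cancel outright, and the $A^*$ contributions combine as $-1 + (z+1) - z = 0$. The uniqueness claim is the most substantive step: given reciprocal polynomials $P$, $Q$ of degrees $d+k$ and $d+k-1$ satisfying $z^kA = P - Q$, I would apply $z \mapsto 1/z$ to this equation and multiply by $z^{d+k}$; using $z^{d+k}P(1/z) = P(z)$ and $z \cdot z^{d+k-1}Q(1/z) = zQ(z)$, this yields a second equation $A^*(z) = P(z) - zQ(z)$. Subtracting gives $(z-1)Q(z) = z^kA(z) - A^*(z)$, whence $Q = P_k$, and then $P = z^kA + Q = P_{k+1}$ by (\ref{E:A}). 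The only real obstacle throughout is the mild bookkeeping for $P_0$ in the reciprocity step, since for $k=0$ the leading and trailing coefficients of $A - A^*$ may both vanish.
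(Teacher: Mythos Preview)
Your proof is correct and follows the same route as the paper, which simply says the assertions ``follow directly from the definitions'' and notes that the recurrence has characteristic polynomial $X^2-(z+1)X+z=(X-z)(X-1)$; you have merely written out the verifications in full, including the uniqueness argument that the paper leaves implicit. One small slip: in the $P_0$ paragraph, the identity $z^{d-1}P_0(1/z)=P_0(z)$ forces $2m+e=d-1$ (not $m+e=d-1$), since the top and bottom coefficients of $P_0$ vanish symmetrically; this does not affect your conclusion that $R^*=R$.
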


\begin{proof}
This is a collection of simple assertions, all of which follow directly from the definitions.  For the recurrence, its characteristic  polynomial is $X^2 - (z+1)X + z = (X-z)(X-1)$.
\end{proof}


Suppose that $A(z)$ is monic.  If $A(0)\ne 1$, then the degree of $P_0$ is $d-1$, but if $A(0)=1$ then this degree is at most $d-2$.
We record as a lemma the observation that no further cancellation in the degree of $P_0$ can occur if $A(z)$ is the minimal polynomial of a Pisot number $\theta$, unless $\theta$ is a reciprocal quadratic Pisot number.

\begin{lemma}\label{L:nonpal}
Let $A(z)$ be the minimal polynomial of a Pisot number $\theta$.  If $A(z)$ is not a reciprocal (and hence quadratic) polynomial, then $P_0(z) = \bigl(A(z) - A^*(z)\bigr)/(z-1)$ has degree at least $d-2$.
\end{lemma}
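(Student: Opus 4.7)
I would prove the contrapositive: $\deg P_0 \le d-3$, together with $A$ being the minimal polynomial of a Pisot number, forces $A = A^*$, contradicting the hypothesis that $A$ is not reciprocal.

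The first step is to translate the degree condition into a coefficient condition. Set $B := A - A^*$; the coefficient of $z^k$ in $B$ is $a_k - a_{d-k}$, so $B$ is \emph{anti-reciprocal}: $b_k = -b_{d-k}$ for all $k$. Hence if $B \ne 0$ then $\deg B + \mathrm{ord}_0 B = d$, and writing $B = (z-1)P_0$ yields $\deg P_0 + \mathrm{ord}_0 P_0 = d - 1$. So $\deg P_0 \ge d-2$ is equivalent to $\mathrm{ord}_0 P_0 \le 1$, i.e.\ to ``$a_0 \ne 1$ or $a_1 \ne a_{d-1}$''. In the first subcase the leading coefficient $1-a_0$ of $B$ is nonzero, so $\deg P_0 = d-1$; in the second ($a_0 = 1$ but $a_1 \ne a_{d-1}$), the coefficient $a_{d-1}-a_1$ of $z^{d-1}$ in $B$ is nonzero, so $\deg P_0 = d-2$; either way the lemma is immediate.

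The work lies in ruling out the remaining case $a_0 = 1$ and $a_1 = a_{d-1}$. Here the anti-reciprocal symmetry forces $b_0 = b_1 = 0$, so $z^2(z-1) \mid B$, and I would write $B = z^2(z-1)R(z)$ with $R \in \mathbb{Z}[z]$ of degree at most $d-5$. For $d \le 4$ this already forces $R = 0$ and hence $A = A^*$: for $d \le 2$ the claim is vacuous; for $d = 3$ the conditions expand directly to $A(z) = (z+1)\bigl(z^2 + (a-1)z + 1\bigr)$, which is reducible and so cannot be a minimal polynomial; and for $d = 4$ the conditions $a_0 = a_4 = 1$ and $a_1 = a_3$ are exactly the reciprocal conditions, so $A = A^*$ at once.

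The hard part, for $d \ge 5$, is to show $R = 0$ using the full Pisot hypothesis: $\theta$ is real and $>1$, the remaining conjugates $\bar\theta_j$ satisfy $|\bar\theta_j| < 1$, $\theta$ is a unit (from $a_0 = 1$), and $\mathrm{tr}(\theta) = \mathrm{tr}(1/\theta)$ (from $a_1 = a_{d-1}$). Evaluating $A = A^* + z^2(z-1)R$ at $z = \theta$, using $A(\theta) = 0$ and $A^*(\theta) = \theta^d A(1/\theta)$, yields the exact identity $A(1/\theta) = -(\theta-1)R(\theta)/\theta^{d-2}$, and the plan is to exploit this together with the product formula $A(1/\theta) = \bigl((1/\theta)-\theta\bigr)\prod_j\bigl((1/\theta)-\bar\theta_j\bigr)$ and the root-moduli constraints imposed by Pisot-ness to derive a contradiction with $R \ne 0$. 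Making this incompatibility quantitative---ruling out every admissible choice of $R$ via the fixed placement of $\theta$ outside and the $\bar\theta_j$ inside the unit disc---is the crux of the argument.
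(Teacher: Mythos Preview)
Your reduction is correct and matches the paper's: the lemma is equivalent to showing that one cannot have both $a_0 = 1$ and $a_1 = a_{d-1}$ when $A$ is a non-reciprocal Pisot minimal polynomial. Your treatment of $d \le 4$ is fine. The gap is the case $d \ge 5$: you set up the identity $A(1/\theta) = -(\theta-1)R(\theta)/\theta^{d-2}$ and then say that ``making this incompatibility quantitative \ldots\ is the crux of the argument'', but you never carry out that crux. There is no evident sign or size obstruction coming from your identity alone: the conjugates $\bar\theta_j$ inside the unit disc can contribute to $\sum(\theta_j - 1/\theta_j)$ with either sign (real conjugates in $(0,1)$ give negative contributions, those in $(-1,0)$ give positive ones, and complex pairs $re^{i\phi}$ contribute $2\cos\phi\,(r-1/r)$, whose sign depends on $\phi$), so the condition $\mathrm{tr}(\theta)=\mathrm{tr}(1/\theta)$ is not ruled out by elementary inequalities on the root moduli. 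You would need a genuinely new argument here, and none is supplied.

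The paper dispatches exactly this point by appealing to a classical theorem of Dufresnoy and Pisot: if $A$ is the minimal polynomial of a Pisot number and not a reciprocal quadratic, then in the power series expansion $A(z)/A^*(z) = c_0 + c_1 z + c_2 z^2 + \cdots$ about $z=0$ one has $c_1 > 0$. Under $a_0 = 1$ and $a_1 = a_{d-1}$ one computes $c_0 = a_0 = 1$ and $c_1 = a_1 - a_0 a_{d-1} = 0$, a contradiction. So the paper's proof is short precisely because it outsources the hard step to \cite{DP}; your proposal would need either to invoke that result or to reprove it, and currently does neither.
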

Thus, writing $A(z) = z^d + a_{d-1}z^{d-1} + \cdots + a_0$, this tells us that if $a_0=1$ then $a_{d-1}\ne a_{1}$.
In this sense, Pisot polynomials are strongly non-palindromic.
\begin{proof}
If $a_0\ne1$ then $P_0$ has degree $d-1$.
If $a_0=1$ and $a_{1}=a_{d-1}$, then expanding $A(z)/A^*(z)$ about $z=0$ gives
\[
A(z)/A^*(z) = 1 + u_2z^2 + u_3z^3 + \cdots\,.
\]
This contradicts \cite[Th\'{e}or\`{e}me 1]{DP} (which asserts that the coefficient of $z$ in such an expansion must be strictly positive), unless $A$ is a reciprocal quadratic polynomial.
\end{proof}

\subsection{A winding argument}\label{SS:Winding}

\begin{theorem}\label{T:winding}
Suppose that $A(z)$ is the minimal polynomial of a Pisot number.
For each $k\ge0$, define $P_k(z)$ as in Lemma \ref{L:Pk}.

For all large enough $k$, both $P_{k+1}$ and $P_k$ have all but two
roots on the unit circle, with the other roots real and positive,
and the roots of $P_{k+1}(z)$ and $(z-1)P_k(z)$ that lie on the unit
circle interlace.

For all $k\ge 1$, the rational function $(z-1)P_k(z)/P_{k+1}(z)$ is an interlacing quotient (either 
CC, CS or SS).
\end{theorem}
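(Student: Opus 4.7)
The plan is to analyse the zeros of $P_k$ on $\mathbb{T}$ via the identity $(z-1)P_k(z) = z^k A(z) - A^*(z)$, combined with a polar decomposition of $A$ on $\mathbb{T}$.  Since the Pisot condition forces $A$ to have no zero on $\mathbb{T}$, we may write $A(e^{i\theta}) = r(\theta)e^{i\phi(\theta)}$ with $r(\theta) > 0$ and $\phi(\theta)$ a smooth real-valued branch; using that $A$ has real coefficients, $A^*(e^{i\theta}) = r(\theta)e^{i(d\theta-\phi(\theta))}$, so
\[
(z-1)P_k(e^{i\theta}) \;=\; 2i\, r(\theta)\, e^{i(k+d)\theta/2}\,\sin\psi_k(\theta),
\qquad \psi_k(\theta) := \tfrac{(k-d)\theta}{2} + \phi(\theta).
\]
The zeros of $(z-1)P_k$ on $\mathbb{T}$ are precisely the $\theta\in[0,2\pi)$ with $\psi_k(\theta)\in\pi\mathbb{Z}$.

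First I would establish a winding count.  The argument principle applied to $A$, which has $d-1$ roots in the open unit disc, yields $\phi(2\pi)-\phi(0) = 2\pi(d-1)$, so $\psi_k$ increases by $(k+d-2)\pi$ over $[0,2\pi]$.  Because $|A|$ is bounded below on $\mathbb{T}$, $\phi'$ is bounded, so for $k$ large enough $\psi_k'(\theta) = (k-d)/2 + \phi'(\theta) > 0$ throughout; in this regime $\psi_k$ is strictly increasing, so $\sin\psi_k$ has exactly $k+d-2$ simple zeros in $[0,2\pi)$.  Since $A(1)$ is a nonzero real integer, $\phi(0)\in\pi\mathbb{Z}$, and one of these zeros is at $\theta=0$ (i.e., $z=1$).

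Next I would derive the interlacing from the shift $\psi_{k+1}(\theta) - \psi_k(\theta) = \theta/2 \in (0,\pi)$ for $\theta\in(0,2\pi)$: between two consecutive zeros of $\sin\psi_k$ (where $\psi_k$ passes from $m\pi$ to $(m+1)\pi$), $\psi_{k+1}$ takes values in $(m\pi,(m+1)\pi)$ and then in $((m+1)\pi,(m+2)\pi)$, forcing a unique zero of $\sin\psi_{k+1}$ in between.  Combined with the shared zero at $\theta=0$, this yields interlacing on $\mathbb{T}$ of the zeros of $(z-1)P_k$ and $(z-1)P_{k+1}$.  Since $P_{k+1}(1)\ne 0$ for large $k$ (the derivative of $z^{k+1}A(z) - A^*(z)$ at $z=1$ is $(k+1-d)A(1) + 2A'(1)$, nonzero for $k$ large), the interlacing descends to the pair $(P_{k+1},(z-1)P_k)$.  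Degree counting ($d+k$ for each, with $k+d-2$ on-$\mathbb{T}$ zeros) leaves two off-$\mathbb{T}$ roots apiece; reciprocity or antireciprocity forces these to appear as a pair $\{\alpha, 1/\alpha\}$, and a Rouch\'e comparison with $z^kA(z)$ on a small disc about $\theta$ places one such root close to $\theta$ (and its mate close to $1/\theta$) for large $k$, giving positive reality.  This establishes the first assertion and the SS-interlacing of $(z-1)P_k/P_{k+1}$ for large $k$.

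The main obstacle is the second assertion, valid for every $k\ge 1$: for small $k$ the function $\psi_k$ need not be monotonic, so the on-$\mathbb{T}$ zero count of $\sin\psi_k$ is only at least $k+d-2$ (modulo multiplicity), off-$\mathbb{T}$ roots may be absent, and one cannot a priori rule out non-simple zeros of $P_{k+1}$ on $\mathbb{T}$.  My plan here is, for each $k\ge 1$, to count the off-$\mathbb{T}$ roots of $P_{k+1}$ (respectively $(z-1)P_k$) via the argument principle applied to $z^{k+1}A(z)-A^*(z)$ (respectively $z^kA(z)-A^*(z)$) on a contour just outside $\mathbb{T}$: the count is always $0$ or $2$, and Lemma~\ref{L:nonpal} provides the non-degeneracy needed to ensure that when a pair exists it is a positive real pair near $\{\theta,1/\theta\}$ rather than, say, a negative real pair or a hidden multiple root at $z=1$.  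Depending on whether $0$ or $2$ off-$\mathbb{T}$ roots occur for each of the two polynomials, the pair $(P_{k+1},(z-1)P_k)$ falls into CC, CS, or SS, and in all three cases the $\psi$-shift argument combined with strict interlacing on $\mathbb{T}$ yields the required interlacing condition.
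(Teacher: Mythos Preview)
Your treatment of the large-$k$ case is correct and is essentially a rephrasing of the paper's winding argument: your phase $\psi_k$ is half the argument of $z^{k-d}A(z)^2$, and your $\psi$-shift is the mechanism behind the paper's Figure~\ref{fig1}.  The Rouch\'e placement of the off-circle pair near $\{\theta,1/\theta\}$ is also fine for large $k$.

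The genuine gap is in your passage to small $k$.  Two concrete problems:

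\textbf{(i)} Your $\psi$-shift interlacing argument, as written, uses that between consecutive zeros of $\sin\psi_k$ the function $\psi_k$ passes monotonically from $m\pi$ to $(m{+}1)\pi$.  You yourself note that $\psi_k$ need not be monotone for small $k$; in that regime, between two consecutive zeros $\psi_k$ may run from $m\pi$ back to $m\pi$, and then your conclusion about a unique zero of $\sin\psi_{k+1}$ in between collapses.  Knowing only that the off-$\mathbb{T}$ count is $0$ or $2$ does not by itself repair this, and your final sentence (``the $\psi$-shift argument combined with strict interlacing on~$\mathbb{T}$ yields the required interlacing'') is circular: the interlacing is precisely what the $\psi$-shift was meant to prove.

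\textbf{(ii)} Your appeal to Lemma~\ref{L:nonpal} to force the off-circle pair, when it exists, to be positive real ``near $\{\theta,1/\theta\}$'' is unjustified.  That lemma bounds the degree of $P_0$ (equivalently, says $a_1\ne a_{d-1}$ when $a_0=1$); it says nothing about the location of off-$\mathbb{T}$ roots of $P_k$ for $k\ge1$.  Real coefficients plus reciprocity do force such a pair to be real, but they do not rule out a negative pair $\{-\beta,-1/\beta\}$.

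The paper's route for small $k$ is quite different and is the idea you are missing.  It splits on the sign of $A^*(\theta)$.  If $A^*(\theta)>0$, then $P_k(\theta)=-A^*(\theta)/(\theta-1)<0$ for \emph{every} $k\ge0$, so each $P_k$ has a real root beyond $\theta$, pinning down the off-circle pair as positive real for all $k$ at once.  In the harder case $A^*(\theta)<0$, the paper passes to the real Tchebyshev side, where the identity $P_{k+2}-(z+1)P_{k+1}+zP_k=0$ becomes the three-term recurrence $p_{k+1}(x)=xp_k(x)-p_{k-1}(x)$.  Having already established (for large $k_0$) that the real zeros of $p_{k_0}$ and $p_{k_0+1}$ interlace perfectly, one then runs the recurrence \emph{backwards}: interlacing of $p_k$ and $p_{k+1}$ forces interlacing of $p_{k-1}=xp_k-p_{k+1}$ with $p_k$, and so on down to $k=1$.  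This downward propagation via the recurrence is what gives uniform control for every $k\ge1$; the flavour (CC, CS, or SS) is then simply read off from whether each of $P_k$, $P_{k+1}$ still has a pair of roots off~$\mathbb{T}$.
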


\begin{proof}
Suppose that $A(z)$ has degree $d$.
Note that $P_k(1)=kA(1)+A'(1)-(A^*)'(1)$, and this is negative for
all large enough $k$, since $A(1)<0$ ($A(z)$ is the minimal
polynomial of a Pisot number). Hence $P_k(z)$ has at least one
real root greater than 1, for all large enough $k$. Since $P_{k+1}$ and $P_k$
are both reciprocal, each has at least two positive real roots,
for all large enough $k$. From now on, we assume that $k$ is large
enough (say $k\ge k_0\ge 1$) for this to hold.

For $z$ on the unit circle, $(z-1)P_k(z)=0$ if and only if
$$z^kA(z)=A^*(z)=z^d A(\overline{z}) = z^d \overline{A(z)}\,,$$
which is equivalent to $z^{k-d}A(z)^2=|A(z)|^2$, which is
equivalent to $z^{k-d}A(z)^2$ being real and positive.

Now $z^{k-d}A(z)^2$ winds round the origin $k+d-2$ times as $z$
winds round 0.
Hence $(z-1)P_k(z)$ has at least $k+d-2$ zeros on the unit circle,
and $P_k(z)$ has at least $k+d-3$ roots on the unit circle.
Together with at least 2
roots not on the unit circle, we have accounted for all possible
roots: $P_k(z)$ has exactly $k+d-3$ roots on the unit circle, and
two other roots, real and positive (one of them being greater than
1 and the other between 0 and 1).

 Similarly $P_{k+1}(z)$ has   exactly $k+d-2$ roots on the unit
circle, and two other roots, real and positive.

For the interlacing property, we look more closely at what happens
as $z$ winds round 0 in the positive sense (anticlockwise), on the
unit circle, starting at $z=1$.
When $z=1$, the argument of $z^{k-d}A^2(z)$ is 0. As $z$ winds
round the unit circle, the argument increases to $(k+d-2)2\pi$,
not necessarily monotonically. The argument is an integer multiple
of $2\pi$ precisely when $P_k(z)=0$ (or when $z=1$). The argument
equals that of $1/z$ (modulo integer multiples of $2\pi$)
precisely when $P_{k+1}(z)=0$. It is clear from Figure \ref{fig1} that this
must happen at least once (and hence, by counting, exactly once)
between each two consecutive zeros of $(z-1)P_k(z)$, as claimed:
the line running from bottom left to top right (which need not be
a straight line!) must cross one of the short diagonals at least
once between each pair of horizontal lines.

\begin{figure}
\begin{picture}(450,130)(10,0)
\setlength{\unitlength}{0.8pt}
\multiput(50,30)(0,100){2}{\line(1,0){400}} \multiput(50,30)(100,0){5}{\line(0,1){100}} \multiput(50,130)(100,0){4}{\line(1,-1){100}} \put(50,30){\line(4,1){400}} \multiput(130,50)(80,20){4}{\circle*{5}} \multiput(150,55)(100,25){4}{\circle{5}}

\put(45,20){$0$} \put(145,20){$2\pi$} \put(245,20){$4\pi$} \put(345,20){$6\pi$} \put(445,20){$8\pi$} \put(170,15){\vector(1,0){140}} \put(200,0){$\arg\bigl(z^{k-d}A^2(z)\bigr)$}

\put(35,25){$0$} \put(30,125){$2\pi$} \put(10,80){$\arg z$} \put(40,60){\vector(0,1){40}}
\end{picture}
\caption{The case $d+k-2=4$. The zeros of $(z-1)P_k(z)$ [$\circ$] and of $P_{k+1}(z)$ [$\bullet$] interlace.}
\label{fig1}
\end{figure}
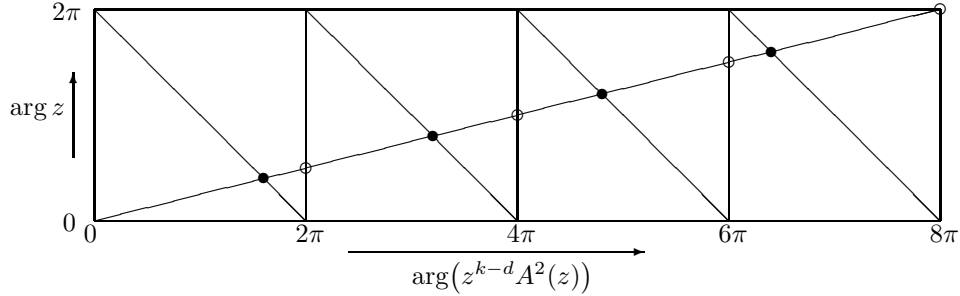

For the final assertion of the theorem, we need to consider what happens for smaller values of $k\ge 1$.
The winding argument still accounts for all but two of the zeros of each $P_k$  .  We need to pin down the other two roots, and establish the claimed interlacing property.
Let $\theta>1$ be the Pisot root of $A(z)$.
If $A^*(\theta)>0$, then $P_k(\theta)<0$, in which case $P_k$ always has a real root greater than $\theta$.
In this case   there is nothing more to prove: we have SS-interlacing.

We are left with the case that   $A^*(\theta)<0$.
Then $P_k$ has no real root greater than $\theta$, for any $k$.
In particular, for $k\ge k_0$ the Salem root $\tau_k$ of $P_k$ satisfies $1<\tau<\theta$.
Then from $z^kA(z)=P_{k+1}(z)-P_k(z)$ we have $P_{k+1}(\tau_k)<0$, and hence $\tau_{k+1}>\tau_k$.
If follows that $(z-1)P_k(z)/P_{k+1}(z)$ is a type $1$ SS-interlacing quotient for $k\ge k_0$, and the roots and poles of the corresponding real interlacing quotient $p_k(x)/p_{k+1}(x)$ interlace perfectly on the real line.
The recurrence (\ref{E:recurrence}) translates to the real world as
\[
p_{k+1}(x) = xp_k(x) - p_{k-1}(x)\,.
\]
Since the zeros of $p_{k_0}$ and $p_{k_0+1}$ interlace, one deduces that those of $p_{k_0-1}$ and $p_{k_0}$ interlace, and then those of $p_{k_0-2}$ and $p_{k_0-1}$, and so on.
Thus $P_{k+1}(z)$ and $(z-1)P_{k}(z)$ interlace for all $k\ge 1$  .
If $p_{k}(2)=0$, then $P_{k}$ has a double zero at $z=1$, and $(z-1)P_{k}$ has a triple zero there: in this case we have CS-interlacing.  If $P_{k+1}$ has all roots on the unit circle, then we have CC-interlacing (if $p_{k+1}(2)=0$, then $P_{k+1}$ has a double zero at $z=1$, and $P_{k+1}(z)/(z-1)$ interlaces with $P_{k}(z)$).  If $P_{k+1}$ is Salem but $P_{k}$ is cyclotomic, we have CS-interlacing.

\end{proof}

\subsection{Theorem \ref{T:SSPisot} gives all Pisot
numbers}\label{SS:allPisots}

Now we are in a position to show that the interlacing construction
given in Theorem \ref{T:SSPisot} produces all Pisot numbers, with
$h(z)=1/z$.

\begin{theorem}\label{T:allPisots}
Given any Pisot number $\theta$, there exists an SS-interlacing
quotient $Q(z)/P(z)$ satisfying the conditions of Theorem
\ref{T:SSPisot} (with $h(z)=1/z$), such that the only solutions to
$Q(z)/P(z)=1$ are $\theta$, its conjugates, and $0$.
\end{theorem}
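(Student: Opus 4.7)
The plan is to apply Theorem \ref{T:SSPisot} with $h(z) = 1/z$ to the interlacing quotient coming from the sequence $(P_k)$ of Lemma \ref{L:Pk}. Let $A$ be the minimal polynomial of $\theta$, of degree $d$. By Theorem \ref{T:winding}, for $k$ sufficiently large the pair $P(z) := P_{k+1}(z)$ (reciprocal) and $Q(z) := (z-1)P_k(z)$ (antireciprocal) forms an SS-interlacing quotient: both polynomials have degree $d+k$; each has exactly two roots off $\mathbb{T}$, which are real and positive; and their unit-circle roots interlace.

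To verify the hypothesis of Theorem \ref{T:SSPisot} I set $g(z) = Q(z)/\bigl((z-1)P(z)\bigr) = P_k(z)/P_{k+1}(z)$ and check that $\lim_{z\to 1^+}\bigl(g(z)+h(z)\bigr) < 2$. L'H\^opital applied to $P_k(z) = \bigl(z^kA(z) - A^*(z)\bigr)/(z-1)$ gives $P_k(1) = kA(1) + A'(1) - (A^*)'(1)$. Since $A(1) < 0$ (as $A$ is the minimal polynomial of a Pisot number), for all sufficiently large $k$ both $P_k(1)$ and $P_{k+1}(1)$ are negative and their ratio lies strictly between $0$ and $1$, so $g(1)+h(1) < 2$.

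Theorem \ref{T:SSPisot} then guarantees that the nonzero zeros of $f(z) = g(z) + h(z) - 1 - 1/z = g(z) - 1$ form the Galois orbit of a Pisot number together with possibly some roots of unity. The conclusion is pinned down by the algebraic identity $P_{k+1} - P_k = z^k A$ from equation (\ref{E:A}), which yields
\[
f(z) \;=\; \frac{P_k(z) - P_{k+1}(z)}{P_{k+1}(z)} \;=\; -\,\frac{z^k A(z)}{P_{k+1}(z)}\,.
\]
Hence the nonzero zeros of $f$ are exactly the roots of $A$, namely $\theta$ and its conjugates, together with $z=0$ as a zero of multiplicity $k$; no roots of unity appear, and the Pisot number recovered is precisely $\theta$.

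The genuine obstacle in the whole argument is the SS-interlacing assertion of Theorem \ref{T:winding}, which has already been handled; given that result, the present theorem is essentially bookkeeping built around the identity $P_{k+1} - P_k = z^k A$. One minor wrinkle is the degenerate reciprocal quadratic Pisot case, where $A^* = A$ forces $A$ to divide every $P_k$; in that case the factor $A(z)$ in the numerator of $f$ cancels against a matching factor in $P_{k+1}(z)$, and this small family must be handled separately, either by an ad hoc construction or by appealing to an earlier interlacing theorem.
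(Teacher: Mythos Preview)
Your proof is correct and follows essentially the same route as the paper's: both take $P=P_{k+1}$ and $Q=(z-1)P_k$, verify the limit condition via $P_k(1)=kA(1)+A'(1)-(A^*)'(1)$ with $A(1)<0$, and then reduce $g(z)-1=0$ to $z^kA(z)=0$ using the identity $P_{k+1}-P_k=z^kA$. Your closing caveat about the reciprocal quadratic Pisot case (where $A^*=A$ forces $A\mid P_k$ and $A\mid P_{k+1}$, so the reduced quotient is CC- rather than SS-interlacing) is a genuine edge case that the paper's own proof passes over in silence.
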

\begin{proof}
Let $A(z)$ be the minimal polynomial of $\theta$. We consider
$P(z)=P_{k+1}(z)=\bigl(z^{k+1}A(z)-A^*(z)\bigr)/(z-1)$, and
$Q(z)=(z-1)P_k(z)=z^kA(z)-A^*(z)$.

We have seen (Theorem \ref{T:winding}) that for all large
enough $k$  the quotient $Q/P$ is an SS-interlacing quotient.  To
apply Theorem \ref{T:SSPisot} with $h(z)=1/z$ we need the
condition $\lim_{z\rightarrow 1+} Q(z)/(z-1)P(z)<1$. But
\[
\lim_{z\rightarrow 1+}\frac{Q(z)}{(z-1)P(z)} =\lim_{z\rightarrow
1+}\frac{P_k(z)}{P_{k+1}(z)} = \frac{kA(1)+A'(1)-(A^*)'(1)}
{(k+1)A(1)+A'(1)-(A^*)'(1)}\,,
\]
and this is less than $1$ if $k$ is large enough, since $A(1)<0$.

Finally we note that $Q(z)/(z-1)P(z)=1$ is equivalent to
$z^kA(z)=0$, which has as its roots $\theta$, all the conjugates
of $\theta$, and 0 (assuming $k>0$).
\end{proof}

For smaller values of $k$ the quotient $Q/P$ in the proof of Theorem \ref{T:allPisots} might be
CS-interlacing or CC-interlacing.
The case $k=0$ and $P_0(0)=0$ is exceptional, as ever.

The proof of Theorem \ref{T:allPisots} uses Salem's method to construct the $P_k$ from $A$, and then shows, conversely, how $A$ can be recovered from $P_k$ via (\ref{E:A}) of Lemma \ref{L:Pk}, 
 for $k$ sufficiently large.

\section{All Salem numbers via interlacing}\label{S:allSalems}
\subsection{Boyd's theorem}  We recall the following fundamental
result of Boyd \cite{Bo}.

\begin{theorem}[{{\cite[Theorem 4.1]{Bo}}}]\label{T:Boyd}
Let $\tau$ be a Salem number, with minimal polynomial $R(z)$. Define $S_1(z) = z^2 + 1$, $S_{-1}(z) = z-1$.  Then
for each choice of $\varepsilon=\pm1$ there exist infinitely many
Pisot polynomials $A(z)$ such that (with $A^*(z)$ as before)
\begin{equation}\label{E:Boyd}
S_\varepsilon(z)R(z) = zA(z) + \varepsilon A^*(z)\,.
\end{equation}
\end{theorem}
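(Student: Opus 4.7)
The plan is to (i) show that (\ref{E:Boyd}) admits a $d$-parameter integer lattice of polynomial solutions $A$, and (ii) extract infinitely many Pisot members of this lattice via a one-parameter deformation combined with a Rouch\'{e}-style argument.

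For (i), view $\Phi_\varepsilon: A \mapsto zA + \varepsilon A^*$ as a linear map from polynomials of degree $n$ to polynomials of degree $n+1$, where $n=2d+1$ if $\varepsilon=1$ and $n=2d$ if $\varepsilon=-1$. A direct calculation yields $\Phi_\varepsilon(A)^*=\varepsilon\,\Phi_\varepsilon(A)$, so the image lies in the $(d+2)$- or $(d+1)$-dimensional space of (anti)reciprocal polynomials of degree $n+1$, matching the symmetry of $S_\varepsilon R$. Hence $\ker\Phi_\varepsilon$ has dimension $d$, with explicit basis $\{z^i-z^{2d-i}:0\le i\le d-1\}$ when $\varepsilon=1$ and $\{z^i+z^{2d-1-i}:0\le i\le d-1\}$ when $\varepsilon=-1$. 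Since $S_\varepsilon R$ has the correct symmetry, solving coefficient-by-coefficient produces an integer preimage $A_0$: the only potential obstruction is that the middle-coefficient equation $2a_d=M_{d+1}$ (for $\varepsilon=1$) requires $M_{d+1}$ to be even, but $M=(z^2+1)R$ gives $M_{d+1}=R_{d-1}+R_{d+1}=2R_{d-1}$ by reciprocity of $R$, so no obstruction arises. The integer solutions form an affine lattice $A_0+\mathbb Z L_1\oplus\cdots\oplus\mathbb Z L_d$.

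For (ii), consider the one-parameter family $A_k=A_0+kL$ for a chosen kernel element $L$ of degree $n-1$. The leading coefficient of $A_k$ equals that of $A_0$ (which is $1$, since $R$ is monic and the top coefficient of $S_\varepsilon R$ equals that of $\Phi_\varepsilon(A_0)$), and $A_k(z)/k\to L(z)$ uniformly on compact subsets of $\{|z|>1\}$ as $|k|\to\infty$. Since $L$ has all its roots on the closed unit disc, a Rouch\'{e} comparison of $A_k$ with $kL$ on $|z|=1+\delta$ shows that $A_k$ has exactly $n-1$ zeros inside this circle and one zero outside, for $|k|$ large. By reality and uniqueness this outside zero is real; choosing the sign of $k$ so that $A_k(1)$ has sign opposite to the leading coefficient places it in $(1,\infty)$. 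This is our Pisot candidate, and its distinct values as $k\to\infty$ (approximately $|k|$) yield infinitely many distinct candidates.

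The main obstacle is to verify that the remaining $n-1$ zeros of $A_k$, which cluster near the roots of $L$ on $\mathbb T$, in fact lie strictly inside the open unit disc. First-order perturbation of a root $\omega$ of $L$ gives the displacement $-A_0(\omega)/(k L'(\omega))$, and this moves the root inside $\mathbb T$ precisely when $\mathrm{Re}\bigl(\overline{\omega}\,A_0(\omega)/L'(\omega)\bigr)>0$; this condition is not automatic. Boyd's resolution is to choose $L$ carefully so that this inward-perturbation condition holds at every root of $L$, exploiting the fact that $A_0$ itself satisfies (\ref{E:Boyd}) with $R$ a Salem minimal polynomial (so that $|A_0|$ on $\mathbb T$ inherits specific behaviour near the Salem roots of $R$). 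Once $A_k$ is shown to have exactly one root outside $\mathbb T$ and none on it, a standard argument (stripping cyclotomic and $z$-factors) identifies its remaining non-trivial factor as the minimal polynomial of a Pisot number, completing the construction.
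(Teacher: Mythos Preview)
The paper does not prove this theorem: it is quoted from Boyd \cite[Theorem~4.1]{Bo} and used as a black box in \S\ref{S:allSalems}. So there is no proof here to compare against; I can only assess your attempt on its own terms.

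Your part (i) is correct, including the parity check for the middle coefficient when $\varepsilon=1$. Part (ii) has the right shape but two real problems. First, the blanket claim that a kernel element $L$ of degree $n-1$ has all its roots in the closed unit disc is false: for $\varepsilon=1$ such $L$ are precisely the degree-$2d$ antireciprocal polynomials, and for instance (with $d=3$) the kernel element $z^6-4z^4+4z^2-1=(z^2-1)(z^4-3z^2+1)$ has four real roots off $\mathbb{T}$. Your Rouch\'e comparison with $kL$ then miscounts. This is repaired by committing to a specific $L$ such as $1-z^{2d}$ (for $\varepsilon=1$) or $1+z^{2d-1}$ (for $\varepsilon=-1$), whose roots do lie on $\mathbb{T}$.

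Second, and this is the substantive gap, you correctly flag the ``main obstacle'' (that the $n-1$ clustered roots lie \emph{strictly} inside $\mathbb{T}$) but do not resolve it: you set up a first-order displacement, state a sign condition, and then defer to ``Boyd's resolution'' without exhibiting any $L$ for which that condition holds at every root. As written this is not a proof. A cleaner route than perturbation: observe from the defining equation that if $A_k(\omega)=0$ with $|\omega|=1$ then $A_k^*(\omega)=\omega^n\overline{A_k(\omega)}=0$ too, so $S_\varepsilon(\omega)R(\omega)=0$. The unit-circle zeros of $R$ are Salem conjugates, hence not roots of unity, so with $L$ as above one has $L(\omega)\ne 0$ there and $A_k(\omega)=A_0(\omega)+kL(\omega)\ne 0$ for all but finitely many $k$. (The zeros of $S_\varepsilon$ itself on $\mathbb{T}$ need a small separate check, adjusting $A_0$ within the lattice if it happens to vanish there.) For such $k$, $A_k$ has no root on $\mathbb{T}$, the Rouch\'e count on $|z|<1+\delta$ transfers to $|z|<1$, and sending $k\to+\infty$ places the unique exterior root near $+k\in(1,\infty)$. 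With this argument no cyclotomic factors can arise, so your closing reference to ``stripping cyclotomic factors'' is superfluous.
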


\subsection{All Salem numbers via interlacing}
Armed with Theorem \ref{T:Boyd}, we show first (Lemma \ref{L:allSalems}) that we can produce all Salem numbers
via SS-interlacing quotients, but with a ``right hand side'' other than $1+1/z$, as used 
in Theorems \ref{T:CCSalem}, \ref{T:CSSalem} and \ref{T:SSSalem}.

\begin{lemma}\label{L:allSalems}
Let $\tau$ be any Salem number, and choose $\varepsilon=\pm1$.
Then for all sufficiently large $k$ there exists an SS-interlacing
quotient $Q(z)/P(z)$ such that the only non-zero
solutions to
\begin{equation}\label{E:allSalems}
\frac{Q(z)}{(z-1)P(z)} =
\frac{z^{k-1}+\varepsilon}{z^k+\varepsilon}
\end{equation}
are $\tau$, its conjugates, and perhaps some roots of unity.
\end{lemma}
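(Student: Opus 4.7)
The plan is to invoke Boyd's theorem to produce a Pisot polynomial $A(z)$ tied to $R(z)$, and then recycle the machinery of Section~\ref{S:allPisots}. Theorem~\ref{T:Boyd} supplies infinitely many Pisot polynomials $A(z)$ with $S_\varepsilon(z)R(z)=zA(z)+\varepsilon A^*(z)$, so I would pick one with $A(0)\ne 0$, making $A$ the minimal polynomial of an honest Pisot number; this is the setting in which Theorem~\ref{T:winding} directly applies.

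Having fixed such an $A$, I would set $P_k(z)=\bigl(z^kA(z)-A^*(z)\bigr)/(z-1)$ as in Lemma~\ref{L:Pk} and propose $P(z):=P_{k+1}(z)$, $Q(z):=(z-1)P_k(z)$. For all sufficiently large $k$, Theorem~\ref{T:winding} guarantees that $Q/P=(z-1)P_k/P_{k+1}$ is an SS-interlacing quotient, which dispatches the structural half of the lemma.

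The analytic half reduces, after clearing denominators in (\ref{E:allSalems}), to the polynomial identity
\[
P_k(z)(z^k+\varepsilon)-P_{k+1}(z)(z^{k-1}+\varepsilon)=0.
\]
The main (though short) computation is to simplify the left hand side. Using $P_{k+1}=P_k+z^kA$ (equation (\ref{E:A})) together with $(z-1)P_k=z^kA-A^*$ collapses it to $-z^{k-1}\bigl(A^*(z)+\varepsilon zA(z)\bigr)$, and Boyd's identity rewrites the bracketed term, yielding $-\varepsilon z^{k-1}S_\varepsilon(z)R(z)$. The non-zero roots of this polynomial are therefore the roots of $R(z)$ (namely $\tau$ and its conjugates) together with the roots of $S_\varepsilon(z)$, which are roots of unity ($\pm i$ for $\varepsilon=1$, or $1$ for $\varepsilon=-1$).

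The last step, which is the only real source of friction, is to confirm that each root $\sigma$ of $R$ genuinely solves the rational equation (\ref{E:allSalems}) and is not an artefact of clearing denominators; the roots of $S_\varepsilon$ need no such check, since they are already allowed in the conclusion. Applying Boyd's identity locally at $\sigma$ gives the closed form $P_{k+1}(\sigma)=\sigma A(\sigma)(\sigma^k+\varepsilon)/(\sigma-1)$, which is non-zero for all large $k$: $A(\sigma)\ne 0$ because the roots of $A$ consist of a Pisot number $\theta\ne\tau$ together with numbers in the open unit disc, disjoint from the Galois orbit of the Salem number $\tau$; and $\sigma^k+\varepsilon\ne 0$ because an irreducible Salem polynomial cannot share a root with $z^k\pm 1$. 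Thus $\tau$ and its conjugates really are among the non-zero solutions, and the lemma follows.
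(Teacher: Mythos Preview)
Your proof is correct and follows the same route as the paper's: invoke Boyd's theorem to produce $A$, form the sequence $P_k$, appeal to Theorem~\ref{T:winding} for the SS-interlacing, and verify the polynomial identity (the paper organises the last step via $A^*=P_{k+1}-zP_k$, arriving at $z^{k-1}S_\varepsilon R=(1+\varepsilon z^{k-1})P_{k+1}-(1+\varepsilon z^{k})P_k$, but the content is identical to your computation). Your extra paragraph checking that the roots of $R$ are genuine rather than spurious solutions is a nice touch the paper omits; the one small wrinkle is that Boyd's theorem only promises Pisot \emph{polynomials}, so choosing $A$ with $A(0)\ne 0$ is not immediately justified---but if $A=z^mA_0$ then $P_k$ for $A$ equals $P_{k+m}$ for $A_0$, so Theorem~\ref{T:winding} still applies after a shift in $k$ (the paper elides this point as well).
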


\begin{proof}
Let $R(z)$ be the minimal polynomial of $\tau$, and let $A(z)$ be
a Pisot polynomial such that (\ref{E:Boyd}) holds with our choice
of $\varepsilon$.  As in the proof of Theorem \ref{T:allPisots},
we put $P=P_{k+1}(z) = \bigl(z^{k+1}A(z) - A^*(z)\bigr)/(z-1)$ and $Q=(z-1)P_k(z) =
z^kA(z) - A^*(z)$, and repeat the observation that for all
sufficiently large $k$ the quotient $Q(z)/P(z)$ is an
SS-interlacing quotient.  With $z^kA(z) = P_{k+1}(z) - P_k(z)$ we have
$A^*(z) = P_{k+1}(z) - zP_k(z)$, and hence (with $S_\varepsilon(z) = z^2+1$ or
$z-1$ according as $\varepsilon=1$ or $-1$) from (\ref{E:Boyd}) we have
\[
\begin{array}{rcl}
z^{k-1}S_\varepsilon(z)R(z) &=& z^kA(z) + \varepsilon z^{k-1}A^*(z) \\
 & = & (P_{k+1}(z) - P_k(z)) + \varepsilon z^{k-1}(P_{k+1}(z) - zP_k(z)) \\
 & = & (1+\varepsilon z^{k-1})P_{k+1}(z) - (1+\varepsilon z^k)P_k(z)\,,
\end{array}
\]
from which the result follows.
\end{proof}

Instead of taking large $k$ in Lemma \ref{L:allSalems}, we can
consider choosing $k$ of any size. 
  The choice of $k=1$
gives the following
theorem.

\begin{theorem}\label{T:fourtypes}
Consider the equation
\begin{equation}\label{E:allSalems2}
\frac{Q(z)}{(z-1)P(z)} = \frac{2}{z+1}\,.
\end{equation}
Define four types of Salem number $I$, $II$, $III$, $IV$ as
follows.  A Salem number $\tau$ is of type $I$ (respectively,
$II$, $III$, $IV$) if there exist monic polynomials $P(z)$, $Q(z)$
such that $Q(z)/P(z)$ is a CC-interlacing quotient
(respectively, CS-interlacing, type $1$ SS-interlacing, type $2$
SS-interlacing) and for which the only non-zero solutions to
(\ref{E:allSalems2}) are $\tau$, its conjugates, and pehaps some
roots of unity.  Then every Salem number is of at least one of
these four types.
\end{theorem}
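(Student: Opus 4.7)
The plan is to specialise Lemma \ref{L:allSalems} to the smallest nontrivial case $k=1$, combined with Boyd's theorem and the stronger version of the interlacing claim contained in Theorem \ref{T:winding}. Observe first that the right-hand side of (\ref{E:allSalems}) with $k=1$ and $\varepsilon=1$ becomes $(1+1)/(z+1) = 2/(z+1)$, which is exactly the right-hand side of (\ref{E:allSalems2}). So one simply needs to run the construction of Lemma \ref{L:allSalems} at $k=1$ and check what type of interlacing arises.

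More precisely, given a Salem number $\tau$ with minimal polynomial $R(z)$, I would apply Theorem \ref{T:Boyd} with $\varepsilon=1$ to obtain a Pisot polynomial $A(z)$ satisfying $(z^2+1)R(z) = zA(z) + A^*(z)$. Then, following Salem's construction in \S\ref{SS:Pk}, set $P(z) := P_2(z)$ and $Q(z) := (z-1)P_1(z)$. Both $P$ and $Q$ are monic because $A$ is monic (the leading term of $z^{k+1}A(z)$ dominates $A^*(z)$, and $(z-1)P_1(z) = zA(z)-A^*(z)$ has leading coefficient $1$). The algebraic manipulation already carried out in the proof of Lemma \ref{L:allSalems}, applied verbatim at $k=1$, $\varepsilon=1$, yields the identity $(z+1)Q(z) - 2(z-1)P(z) = -(z-1)(z^2+1)R(z)$, so the nonzero solutions of (\ref{E:allSalems2}) are precisely the roots of $R$ (that is, $\tau$ and its conjugates) together with the roots of unity $1,i,-i$, as required.

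It remains to identify the interlacing type, and here the key input is the \emph{final} assertion of Theorem \ref{T:winding}, which states that for every $k\ge 1$ the rational function $(z-1)P_k(z)/P_{k+1}(z)$ is an interlacing quotient, of type CC, CS, or SS (type $1$ or $2$). Applying this with $k=1$ gives immediately that $Q/P$ falls into one of the four classes I--IV enumerated in the theorem statement, and $\tau$ is therefore of at least one of these four types.

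The main obstacle I anticipate is really already dealt with in Theorem \ref{T:winding}: showing that for \emph{small} $k$ (in particular $k=1$) one still obtains a genuine interlacing quotient of one of the allowed types, rather than some degenerate configuration. The winding-number argument there handles the bulk of the zeros, and the downward recursion via $p_{k+1}=xp_k - p_{k-1}$ propagates interlacing from large $k$ back to $k=1$. Given that input, the present theorem reduces to the routine specialisation described above.
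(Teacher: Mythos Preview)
Your proposal is correct and follows exactly the paper's own approach: specialise the construction of Lemma \ref{L:allSalems} to $k=1$, $\varepsilon=1$, and invoke the final assertion of Theorem \ref{T:winding} to guarantee that $(z-1)P_1/P_2$ is a CC-, CS-, or SS-interlacing quotient. One small imprecision: after cancelling the common factor $z-1$, the difference $Q/((z-1)P)-2/(z+1)$ equals $-(z^2+1)R(z)/\bigl((z+1)P(z)\bigr)$, so $z=1$ is not in general a solution of (\ref{E:allSalems2}); but since the theorem only requires the extra solutions to be roots of unity, this does not affect the argument.
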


\begin{proof}

We take $k=1$  and $\varepsilon=1$ in the proof of Lemma \ref{L:allSalems}.
 For the interlacing properties, we appeal to Theorem \ref{T:winding}.
\end{proof}

\section{Comparison with the Bertin-Boyd classification}\label{S:BB}
Let $\tau$ be any Salem number, with minimal polynomial $R(z)$.
  Bertin and Boyd \cite{BB} showed
that there exist reciprocal polynomials $K(z)$   and
$L(z)$   such that $L(z)$ interlaces with
$K(z)R(z)$ on the unit circle.  Their Theorem B is most relevant
here, as it relates to expressing $K(z)R(z)$ in the shape $zA(z)
+\varepsilon A^*(z)$, where $A(z)=z^mA_0(z)$ is a Pisot
polynomial, with $A_0(0)\ne 0$.

In the case $\varepsilon=1$, which they use only when $A_0(0)<0$,
their polynomial $L(z)$ is $A(z) + A^*(z)$; in the case
$\varepsilon=-1$, which they use only when $A_0(0)>0$, their $L(z)$
is our $P_0(z)$.  Our proof of interlacing comes from a winding
argument; theirs is via a characterisation of ``entrances''
and ``exits'' to/from the unit disc for the associated algebraic
curve $zA(z) + \varepsilon t A^*(z)=0$ ($t\ge0$ real)---see
\cite[Lemma 3.1]{Bo}.

\section{Final remarks}\label{S:conclusion}
\subsection{Pisot numbers of negative
trace}\label{SS:negtracePisot} As one application of the
construction in Theorem \ref{T:CCPisot}, we produce an example of
a Pisot number that has trace $-1$ and degree only $16$.  Earlier
examples of Pisot numbers that had negative trace had much larger
degrees (\cite{MRS}, \cite{McK}).  The algorithm in \cite{MS2}
for producing Pisot numbers of any desired trace gives an example
with degree $38$.  The construction there was that in Theorem
\ref{T:CCPisot} with $h(z)=1/(z-1)$.  Instead, take
$g(z) = (z-1)(z^8 + z^7 - z^5 - z^4 - z^3 + z +
1)/(z^2-1)(z^3-1)(z^5-1)$, and $h(z) = z^7/(z-1)(z^7-1)$, to give
a Pisot number with degree $16$ and trace $-1$; its minimal polynomial is
\[
z^{16} + z^{15} - z^{14} - 4z^{13} - 6z^{12} - 7z^{11} - 7z^{10}
-7z^9 - 6z^8 - 4z^7 - 2z^6 - z^5 + z^3 + 2z^2 + 2z + 1\,.
\]

The choice of $g(z)$ (see \S\ref{SS:CC}) produces a low-degree example of a Salem number with trace $-1$ via Theorem \ref{T:CCSalem}.
The choice of the CC-limit function $h(z)$ is motivated by the desire to introduce a new, negative-trace, low-degree cyclotomic factor into the denominator.

We used the Dufresnoy-Pisot-Boyd algorithm \cite{Bo2} to search for small Pisot numbers of small degree and negative trace.
For Pisot numbers below $2$, we found $10$ examples, of degrees between $22$ and $48$.
The degree-$16$ example above is for a Pisot number slightly larger than $2$.
Finding the smallest degree (perhaps $16$?) for a Pisot number of negative trace remains a challenge.

\subsection{Salem numbers of large negative trace}
For Salem numbers of trace $-1$, see \cite{Sm}.
Salem numbers of trace below $-1$ first appeared via a graph construction \cite{MS1}, which can now be seen as a special case of Theorem \ref{T:CCSalem}.  In \cite{MS2}, Salem numbers of arbitrary trace were produced by interlacing, but the interlacing quotients were not optimal for producing minimal degrees.
Starting with the interlacing quotient $g(z) = (z-1)(z^8 + z^7 - z^5 - z^4 - z^3 + z +
1)/(z^2-1)(z^3-1)(z^5-1)$ from \S\ref{SS:CC}, add the CC-interlacing quotient $(z^{18}-1)/(z-1)(z^7-1)(z^{11}-1) + (z^{30}-1)/(z-1)(z^{13}-1)(z^{17}-1)$, and apply Theorem \ref{T:CCSalem} to produce a Salem number of degree $54$ and trace $-3$, the smallest degree currently known for this trace:
\[
z^{54} + 3z^{53} + 2z^{52} -11z^{51} -48 z^{50} -122z^{49} - 245z^{48} + \cdots\,.
\]
(One needs to check that this polynomial has no cyclotomic factors. This can be done using the algorithm of Beukers and Smyth \cite{BS}, or by checking irreducibility.)

A real transform of this polynomial can be found in \cite[\S5.4]{McK2}.


\subsection{Small Salem numbers}

\begin{proposition}
Let $\tau$ be any Salem number below the real root of $z^3 - z -
1$ (so that $\tau$ is smaller than any Pisot number).  Let $R(z)$
be the minimal polynomial of $\tau$, and let $A(z)$ be any Pisot
polynomial such that (\ref{E:Boyd}) holds with $\varepsilon=1$.
Then $A(z)$ has at least three real roots, with at least one
between $1/\tau$ and $1$.
\end{proposition}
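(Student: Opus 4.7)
The plan is to evaluate Boyd's identity $(z^{2}+1)R(z)=zA(z)+A^{*}(z)$ at the four real points $z=-1,\,1/\tau,\,1,\,\tau$, read off the signs of $A$, and then apply the intermediate value theorem. Let $d=\deg A$. Comparing degrees in Boyd's identity gives $d=\deg R +1$, and since the minimal polynomial of a Salem number has even degree, $d$ is necessarily odd. Write $A(z)=z^{m}A_{0}(z)$ in the usual way, with $A_{0}$ the minimal polynomial of the Pisot number $\theta$; because $\tau$ is assumed to lie below the smallest Pisot number (the real root of $z^{3}-z-1$), we have $\theta>\tau>1$.

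First I would collect the easy signs. Since the only root of $A$ outside the open unit disc is the simple real root $\theta$ and $A$ is monic, $A$ is strictly negative throughout $[1,\theta)$; in particular $A(1)<0$ and $A(\tau)<0$. (The identity $A(1)=R(1)$ obtained by evaluating Boyd at $z=1$ confirms this directly: the only negative factor of $R(1)=(1-\tau)(1-1/\tau)\prod_{j}|1-\zeta_{j}|^{2}$ is $(1-\tau)(1-1/\tau)=-(\tau-1)^{2}/\tau$.) Evaluating Boyd at $z=-1$ and using $A^{*}(-1)=(-1)^{d}A(-1)=-A(-1)$, which depends on $d$ being odd, yields $A(-1)=-R(-1)$; because $(1+\tau)(1+1/\tau)>0$ and each $|1+\zeta_{j}|^{2}>0$, we have $R(-1)>0$ and hence $A(-1)<0$.

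The crucial step is to evaluate Boyd at $z=\tau$. Since $R(\tau)=0$, the identity collapses to $\tau A(\tau)+A^{*}(\tau)=0$, and writing $A^{*}(\tau)=\tau^{d}A(1/\tau)$ yields $A(1/\tau)=-\tau^{\,1-d}A(\tau)$. Since $\tau>0$ this forces $A(1/\tau)$ to have the opposite sign from $A(\tau)$, so $A(1/\tau)>0$. The sign pattern $A(-1)<0$, $A(1/\tau)>0$, $A(1)<0$ now gives, by the intermediate value theorem, one real root of $A$ in $(-1,1/\tau)$ and another in $(1/\tau,1)$; together with the Pisot root $\theta$ these are three distinct real roots of $A$, and one of them lies in $(1/\tau,1)$ as required. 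The only slightly delicate point is the parity bookkeeping at $z=-1$, which is what forces $A(1/\tau)>0$ rather than the reverse sign; once that is in hand the argument reduces to two routine applications of the intermediate value theorem.
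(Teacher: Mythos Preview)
Your argument is correct and shares its core with the paper's: evaluate Boyd's identity at $z=\tau$ to obtain $A^*(\tau)>0$ (equivalently $A(1/\tau)>0$), combine with $A(1)<0$, and apply the intermediate value theorem to find a root of $A$ in $(1/\tau,1)$. For the third real root the paper simply observes that $A$ has odd degree, so the number of real roots is odd; you instead compute $A(-1)=-R(-1)<0$ explicitly (using that $d$ is odd) and locate a further root in $(-1,1/\tau)$, which is slightly more informative. One small expository slip in your final sentence: you credit the parity bookkeeping at $z=-1$ with forcing $A(1/\tau)>0$, but that sign follows directly from $A(1/\tau)=-\tau^{\,1-d}A(\tau)$ together with $\tau>0$ and $A(\tau)<0$, independently of the parity of $d$; the parity is used only to determine the sign of $A(-1)$.
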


\begin{proof}
The conditions on $\tau$ imply that $A(\tau)<0$.  Putting $z=\tau$
in (\ref{E:Boyd}) we deduce that $A^*(\tau)>0$ and hence $A^*$ has
a real root between $1$ and $\tau$.  Thus $A$ has at least two
real roots: the Pisot number, and another root between $1/\tau$
and $1$.  Since $A$ has odd degree it must have at least three
real roots.
\end{proof}

For example, taking $\tau$ to be Lehmer's number, one possibility
for $A(z)$ is $z^{11}
-2z^9-4z^8-4z^7-3z^6-z^5+z^4+3z^3+4z^2+3z+1$. Sure enough, this
has real roots approximately equal to $-0.74616$, $0.98390$,
$2.20974$.

It is not known whether or not there is a smallest Salem number.
If there is one, then the next theorem gives some information about it.

\begin{theorem}\label{T:towardsL}
If there is a smallest Salem number, $\tau$, then it is of type
$IV$ (as defined in Theorem \ref{T:fourtypes}), and not of any other type.
\end{theorem}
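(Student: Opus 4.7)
The proof rests on Boyd's Theorem \ref{T:Boyd} combined with the winding argument of Theorem \ref{T:winding}. Apply Boyd with $\varepsilon=1$ to write $(z^2+1)R(z)=zA(z)+A^*(z)$ for infinitely many Pisot polynomials $A(z)$, where $R$ is the minimal polynomial of $\tau$. For any such $A$ with Pisot root $\theta$, evaluating at $z=\theta$ gives
\[
R(\theta)=\frac{A^*(\theta)}{\theta^2+1}.
\]
Since $R$ is monic of even degree with real zeros only at $\tau$ and $1/\tau$ (the remaining zeros lying on $\mathbb{T}$), $R<0$ on $(1/\tau,\tau)$ and $R>0$ on $(\tau,\infty)$; hence $A^*(\theta)<0$ is equivalent to $1<\theta<\tau$.

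\textbf{$\tau$ is of type IV.} If $A^*(\theta)<0$ for some $A$ obtained from Boyd's theorem, then by the type~1 SS case of Theorem \ref{T:winding} Salem's polynomials $P_k$ are Salem for all large $k$ with Salem roots $\tau_k$ increasing towards $\theta$ from below. Consequently $\tau_k<\theta<\tau$ for all sufficiently large $k$, producing infinitely many Salem numbers strictly less than $\tau$ and contradicting minimality. Hence $A^*(\theta)>0$ for every $A$, and by the type~2 SS case of Theorem \ref{T:winding} the pair $\bigl((z-1)P_k,\,P_{k+1}\bigr)$ is a type~2 SS-interlacing quotient for every $k\ge 1$. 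Specialising to $k=1$, exactly as in the proof of Theorem \ref{T:fourtypes}, exhibits $\tau$ as of type~IV.

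\textbf{$\tau$ is not of types I, II, or III.} Suppose for contradiction that a monic interlacing quotient $Q/P$ of type~I, II, or III produces $\tau$ via equation (\ref{E:allSalems2}). In each of these three types the associated real interlacing quotient $q(x)/p(x)$ is strictly decreasing on every interval of its domain. Under the Tchebyshev transform, equation (\ref{E:allSalems2}) becomes $q(x)/p(x)=2/x$, with $\tau$ corresponding to some $x_\tau>2$, while equation (\ref{E:CCSalem}) becomes $q(x)/p(x)=x$. Any solution $x_\sigma>2$ of the latter that lies in the same interval of continuity as $x_\tau$ satisfies
\[
\frac{q(x_\sigma)}{p(x_\sigma)}=x_\sigma>2>\frac{2}{x_\tau}=\frac{q(x_\tau)}{p(x_\tau)},
\]
so strict monotonicity forces $x_\sigma<x_\tau$, yielding an algebraic integer $\sigma<\tau$. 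When the relevant boundary hypothesis of Theorem \ref{T:CCSalem}, \ref{T:CSSalem}, or \ref{T:SSSalem} holds, $\sigma$ is a Salem number, contradicting the minimality of $\tau$.

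\textbf{Main obstacle.} The delicate cases are the boundary regimes where the direct Salem theorems do not apply (type~I with $\lim_{z\to 1+}Q/\bigl((z-1)P\bigr)\le 2$, and type~III with this limit $>2$), together with the geometric configuration in which $x_\sigma$ lies in a different interval from $x_\tau$. Here one augments $Q/P$ by a suitable special CC-limit function $h$ (for instance $h(z)=1/(z-1)$, so that $\lim_{z\to 1+}(g+h)=\infty$) and invokes Theorem \ref{T:CCPisot} or \ref{T:SSPisot} to extract a Pisot number $\theta'$. By Theorem \ref{T:allPisots}, $\theta'$ is a limit of Salem approximants from Salem's recurrence; if $\theta'<\tau$ these approximants are eventually strictly less than $\tau$, and if $\sigma$ turns out to be a reciprocal quadratic Pisot, Salem's construction applied to its minimal polynomial likewise produces Salem approximants below $\tau$. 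Marshalling these sub-cases to produce, in every regime, a Salem number strictly smaller than $\tau$ is the technical heart of the proof.
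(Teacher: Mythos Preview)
For ``$\tau$ is of type IV'' you and the paper take the same route (Boyd with $\varepsilon=1$, then the dichotomy on the sign of $A^*(\theta)$ that underlies Theorem~\ref{T:winding}). Your case $A^*(\theta)<0$ is in fact vacuous---any Pisot number exceeds $1.32$, while the hypothetical smallest Salem number is at most Lehmer's $\approx 1.176$, so $\theta>\tau$ and hence $A^*(\theta)=(\theta^2+1)R(\theta)>0$ automatically---but the conclusion stands. One small point: Theorem~\ref{T:winding} as stated only asserts ``SS'', not ``type~2 SS''; you need the extra line that $P_{k+1}(\tau_k)=\tau_k^{k}A(\tau_k)>0$ forces $\tau_{k+1}<\tau_k$.

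The substantive divergence is in ``not of any other type''. Read literally, the types in Theorem~\ref{T:fourtypes} are defined by existence of \emph{any} monic interlacing quotient, and you try to exclude all of them. The paper does not. Its proof argues only about the \emph{specific} Boyd quotient $(z-1)P_1/P_2$: from the $k=1$ identity $(z^2+1)R(z)=2P_2(z)-(z+1)P_1(z)$ (the displayed formula in the proof of Lemma~\ref{L:allSalems}), if this quotient were CS or type~1 SS then $P_2$ would have a Salem root $\tau_2$ with $P_1(\tau_2)>0$, whence $R(\tau_2)<0$ and $\tau_2<\tau$, contradicting minimality. The CC case is handled by increasing $k$ to the transition point and repeating the argument. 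The remark immediately after the proof explicitly concedes that this does not rule out \emph{other} CC or CS quotients producing small Salem numbers.

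Your attempt at the stronger statement is incomplete, as you yourself flag, and the patch you sketch in the ``Main obstacle'' paragraph does not close the gap: any Pisot number $\theta'$ extracted via Theorem~\ref{T:CCPisot} or~\ref{T:SSPisot} necessarily satisfies $\theta'>1.32>\tau$, so neither $\theta'$ nor Salem approximants converging to it are forced below $\tau$. The paper sidesteps this difficulty by not making the stronger claim, the wording of the theorem notwithstanding.
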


\begin{proof}
Suppose that there is a smallest Salem number, $\tau$.
We take $A(z)$ and the sequence $P_k(z)$ as in the proof of Lemma \ref{L:allSalems}, 
 and claim that  $Q(z)/P(z) = (z-1)P_1(z)/P_2(z)$ must be a type $2$ SS-interlacing quotient, showing that $\tau$ is of type IV.

If $Q(z)/P(z)$ were either CS-interlacing or type $1$ SS-interlacing, then the Salem root of $P(z)$ would be smaller than $\tau$, giving a contradiction.

Finally we eliminate the possibility that $Q(z)/P(z)$ is CC-interlacing.  In this case we increase $k$ until $P_{k+1}$ becomes Salem, with
$P_{k}$ still cyclotomic; then we get (using (\ref{E:allSalems}) with $Q/P=(z-1)P_k/P_{k+1}$, as in the proof of Lemma \ref{L:allSalems}) that the root of $P_{k+1}$ is
smaller than our Salem number, again contradicting the minimality of $\tau$.

\end{proof}

Note that this is not saying that there are no examples of small
Salem numbers that come from CC or CS interlacing: merely that if
we go via Boyd's theorem (as in the definition of types) we will not
see small Salem numbers arising other than as type IV.

\subsection{Further consequences of CC-interlacing}
We conclude with two amusing remarks concerning CC-interlacing quotients, which we record as a single proposition.

\begin{proposition}
Let $Q/P$ be a CC-interlacing quotient.  Then
\begin{itemize}
\item[(a)] $P^2+Q^2$ has all its roots in $\mathbb{T}$;

\item[(b)] $P+Q$ has all its roots in the open unit disc $|z|<1$.
\end{itemize}

\begin{proof}
For (a) we simply apply Lemma \ref{L:interlacingsum}(a): the sum $P/Q+Q/P$ is a CC-interlacing quotient, so its numerator has all roots in $\mathbb{T}$.  We can even say further that these roots interlace with those of $PQ$.

For (b) we use another winding argument.  Let $d$ be the common degree of $P$ and $Q$, and suppose that $z-1 \mid Q$.
We observe that it is enough to show that $f(z) = P(z^2) + Q(z^2)$ (a polynomial of degree $2d$) has all its roots in the open unit disc.

Write $f(z) = z^dg(z) = z^d \bigl( P(z^2)/z^d + Q(z^2)/z^d \bigr)$.
Since $P$ is reciprocal and $Q$ is antireciprocal, $P(z^2)/z^d$ and $Q(z^2)/z^d$ give the real and imaginary parts of $g(z)$ when $z$ is on the unit circle.
As $z$ goes round the unit circle, anticlockwise, the argument of $z^d$ increases by $2d\pi$.
The roots of $P$ and $Q$ interlace on the unit circle, so as $z$ goes round the unit circle the pair $\bigl(\Re g(z),\Im g(z)\bigr)$ cycles $d$ times through one of the patterns $(+,+)$, $(+,-)$, $(-,-)$, $(-,+)$ or $(+,+)$, $(-,+)$, $(-,-)$, $(+,-)$.  We do not (yet) know which of these patterns occurs, nor at what point in the pattern we start, but $d$ complete cycles through one of these two patterns must be made.  In either case, $g(z)$ winds $d$ times round the origin: in the former case it winds clockwise, and in the latter case anticlockwise.
We conclude that as $z$ goes anticlockwise around the unit circle, the argument of $f(z)$ increases by either $0$ or $4d\pi$.
It follows that $P(z^2) + Q(z^2)$ has either all of its roots in the open unit disc or none of them, and the same holds for $P(z) + Q(z)$.
But since $P$ is reciprocal and $Q$ is antireciprocal, we have $P(0)+Q(0) = 1 + (-1) = 0$, so that $P+Q$ has at least one root, namely $0$, that has modulus strictly less than $1$. Hence all $d$ roots must have modulus strictly less than $1$.

\end{proof}
\end{proposition}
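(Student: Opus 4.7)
For part (a), the plan is to use the built-in symmetry of CC-interlacing: since $Q/P$ is a CC-interlacing quotient, so is $P/Q$, so by Lemma \ref{L:interlacingsum}(a) their sum $P/Q + Q/P = (P^2+Q^2)/(PQ)$ is again a CC-interlacing quotient. As $P$ and $Q$ are coprime this quotient is already in lowest terms, and the defining property of a CC-interlacing quotient forces its numerator $P^2+Q^2$ to have all its zeros on $\mathbb{T}$. (As a bonus, the construction shows that these zeros interlace on $\mathbb{T}$ with those of $PQ$.)

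For part (b), I would argue by a winding count, but applied to $f(z) := P(z^2)+Q(z^2)$ rather than to $P+Q$ directly; the point of introducing $z^2$ is that it converts the reciprocal/antireciprocal duality of $P$ and $Q$ into genuine real and imaginary parts. Since $z \mapsto z^2$ maps $\{|z|<1\}$ into itself, it suffices to show that all $2d$ zeros of $f$ lie in $|z|<1$, where $d=\deg P=\deg Q$. Writing $f(z) = z^d g(z)$, reciprocality of $P$ and antireciprocality of $Q$ make $P(z^2)/z^d$ real and $Q(z^2)/z^d$ purely imaginary on $|z|=1$, so $g = \mathrm{Re}\,g + i\,\mathrm{Im}\,g$ splits cleanly there. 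Coprimality of $P$ and $Q$ then rules out zeros of $g$ on $|z|=1$, and the zeros of $\mathrm{Re}\,g$ and $\mathrm{Im}\,g$ on $|z|=1$ are precisely the $2d$ square-roots on $|z|=1$ of the roots of $P$ (resp.\ $Q$); the CC-interlacing condition on $\mathbb{T}$ lifts cleanly to perfect interlacing of these two $2d$-element sets on $|z|=1$.

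From that point, the plan is that as $z$ traverses $|z|=1$ anticlockwise the sign pair $(\mathrm{Re}\,g, \mathrm{Im}\,g)$ must cycle through all four quadrants exactly $d$ times in a single rotational sense, giving $g$ winding number $\pm d$ about the origin and hence $f = z^d g$ winding number $0$ or $2d$. To decide between these, I would evaluate at the origin: $f(0) = P(0)+Q(0)$, and reciprocality of $P$ together with antireciprocality of $Q$ forces $P(0)$ to equal the top coefficient of $P$ and $Q(0)$ to equal minus that of $Q$, so (in the monic case) $f(0)=0$, producing a zero of $f$ in the open disc and forcing the $2d$-zero alternative. Since $|z|<1 \iff |z^2|<1$, the conclusion then transfers back to $P+Q$. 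The main obstacle is honest verification that the sign cycle of $(\mathrm{Re}\,g,\mathrm{Im}\,g)$ really does have a consistent rotational direction (rather than, say, doubling back, which would void the $\pm d$ claim); this needs the simplicity and perfect interlacing of the zeros of $\mathrm{Re}\,g$ and $\mathrm{Im}\,g$ on $|z|=1$, obtained by lifting the CC-interlacing hypothesis through $z \mapsto z^2$.
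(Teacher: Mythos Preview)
Your proposal is correct and follows essentially the same approach as the paper: part (a) via Lemma~\ref{L:interlacingsum}(a) applied to $P/Q+Q/P$, and part (b) via the substitution $z\mapsto z^2$ to turn the reciprocal/antireciprocal pair into real and imaginary parts of $g$, a quadrant-cycling winding count giving $f$ winding number $0$ or $2d$, and then evaluation at $0$ to decide between them. Your parenthetical caveat about the monic case is apt---the paper's own proof tacitly makes the same assumption when it writes $P(0)+Q(0)=1+(-1)=0$.
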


\section*{Acknowledgment}

We are profoundly grateful to the referee who pointed out that in our first submitted draft we had been cavalier with our leading coefficients, leading us to assert several falsehoods.

\end{document}